\patchcmd{\@setaddresses}{\indent}{\noindent}{}{}
\patchcmd{\@setaddresses}{\indent}{\noindent}{}{}
\patchcmd{\@setaddresses}{\indent}{\noindent}{}{}
\patchcmd{\@setaddresses}{\indent}{\noindent}{}{}
\numberwithin{equation}{section}
\DeclareRobustCommand{\qee}{
  \ifmmode \mathqee
  \else
    \leavevmode\unskip\penalty9999 \hbox{}\nobreak\hfill
    \quad\hbox{\qeesymbol}
  \fi
}
\newcommand{\mathqee}{\quad\hbox{\qeesymbol}}
\newcommand{\qeesymbol}{\ensuremath\diamondsuit}
\newcommand{\Lie}[1]{\operatorname{\textsl{#1}}}
\newcommand{\lie}[1]{\operatorname{\mathfrak{#1}}}
\newcommand{\GL}{\Lie{GL}}
\newcommand{\gl}{\lie{gl}}
\newcommand{\sln}{\lie{sl}}
\newcommand{\so}{\lie{so}}
\newcommand{\sP}{\lie{sp}}
\newcommand{\su}{\lie{su}}
\newcommand{\un}{\lie u}
\newcommand{\g}{\lie g}
\newcommand{\bmf}{\lie b}
\newcommand{\kf}{\lie k}
\newcommand{\tf}{\lie t}
\newcommand{\SL}{\Lie{SL}}
\newcommand{\SU}{\Lie{SU}}
\newcommand{\Un}{\Lie{U}}
\newcommand{\C}{{\mathbb C}}
\newcommand{\HH}{{\mathbb H}}
\newcommand{\PP}{{\mathbb P}}
\newcommand{\R}{{\mathbb R}}
\newcommand{\impl}{\textup{impl}}
\newcommand{\symp}{{\sslash}} 
\newcommand{\hkq}{{\sslash\mkern-6mu/}}
\newcommand{\Bigsymp}{\Big/\mkern-11mu\Big/}
\DeclareMathOperator{\Hom}{Hom}
\DeclareMathOperator{\Id}{Id}
\DeclareMathOperator{\rank}{rank }
\DeclareMathOperator{\Spec}{Spec}
\DeclareMathOperator{\syco}{sc}
\DeclareMathOperator{\hoco}{hc}
\DeclareMathOperator{\cosyco}{csc}
\DeclareMathOperator{\As}{As}
\DeclareMathOperator{\gr}{gr}
\DeclarePairedDelimiter{\norm}{\lVert}{\rVert}
\tikzset{cross/.style={cross out, draw=black}} 
\tikzset{circ/.style={circle,fill=white,draw=black}} 
\tikzset{hasse/.style={circle, fill,inner sep=2pt}} 
\tikzset{h/.style={circle, fill,inner sep=2pt}} 
\tikzset{ns/.style={circle, draw,inner sep=2pt}} 
\tikzset{dot/.style={circle,draw,fill=black}} 
\tikzset{gauge/.style={circle,draw}} 
\tikzset{flavor/.style={regular polygon,regular polygon sides=4, draw}} 
\tikzset{doublearrow/.style={ draw=black!75, color=black!75, thick, double distance=3pt, }} 
\tikzset{thirdline/.style={ draw=black!75, color=black!75, thick, }} 
\tikzset{middlearrow/.style={ decoration={markings, mark= at position 0.5 with {\arrow{#1}} , }, postaction={decorate} } } 
\tikzset{subquiver/.style={circle,draw,dashed,inner sep=0pt,minimum size=1cm}}
\newcommand{\aff}{\operatorname{Aff}}
\DeclareFontFamily{U}{mathb}{\hyphenchar\font45}
\DeclareFontShape{U}{mathb}{m}{n}{
      <5> <6> <7> <8> <9> <10> gen * mathb
      <10.95> mathb10 <12> <14.4> <17.28> <20.74> <24.88> mathb12
      }{}
\DeclareSymbolFont{mathb}{U}{mathb}{m}{n}
\DeclareMathSymbol{\righttoleftarrow}      {3}{mathb}{"FD}
\newcommand{\actson}{\reflectbox{$\righttoleftarrow$}}
\newtheorem{lemma}{Lemma}[section]
\newtheorem{proposition}{Proposition}[section]
\newtheorem{definition}{Definition}[section]
\newtheorem{remark}{Remark}[section]
\begin{document}

\title{Implosion, Contraction and Moore-Tachikawa}

\author{Andrew Dancer}
\address{Andrew Dancer\\ Jesus College\\
Oxford
OX1 3DW,
United Kingdom}
\email{dancer@maths.ox.ac.uk }

\author{Frances Kirwan}
\address{Frances Kirwan\\ New College\\
Oxford
OX1 3BN,
United Kingdom}\email{kirwan@maths.ox.ac.uk} 

\author{Johan Martens}

\address{Johan Martens\\ School of Mathematics and Maxwell Institute,\linebreak University of Edinburgh,\\
James Clerk Maxwell Building, Edinburgh EH9 3FD, United Kingdom}
\email{johan.martens@ed.ac.uk}
 
\keywords{Symplectic implosion, contraction, hyperk\"ahler geometry}
\subjclass[2000]{53C26, 53D20, 14L24}

\maketitle

\begin{abstract}
We give a survey of the implosion construction, extending some of its aspects
relating to hypertoric geometry from type $A$ to a general reductive group, and interpret it in the context of the Moore-Tachikawa
category.
We use these ideas to discuss how the contraction construction in symplectic geometry can be
generalised to the hyperk\"ahler or complex symplectic situation.
\end{abstract}

\vspace{.5cm}
\dedicatory{\noindent\emph{Dedicated to Oscar Garcia-Prada on the occasion of his 60th birthday.}}

\section{Introduction} \label{intro}
The symplectic implosion construction of Guillemin, Jeffrey and Sjamaar \cite{GJS} is an Abelianisation
construction for Hamiltonian group actions. Given a symplectic space $M$ with such an action of a compact Lie group $K$, the implosion
$M_{\impl}$
carries an action of the maximal torus
$T$ of $K$, such that the symplectic reduction of the implosion $M_{\impl}$
  by $T$ at a level in the closed positive Weyl chamber is the same as the reduction of $M$ by $K$.

\medskip
If $M$ actually arises as a K\"ahler variety, projective over an affine, equipped with a linearised action of $K_{\C}$ and compatible K\"ahler structure, 
$M_{\impl}$ can also be understood as the non-reductive geometric invariant theory quotient of $M$ by $U$, a maximal unipotent subgroup of $G=K_{\C}$ \cite{kirwan.2011}.

\medskip
Implosion was generalised to the hyperk\"ahler situation in \cite{DKS} in the
$\SU(n)$ case. In the general case, starting from an arbitrary compact connected Lie group, work of Ginzburg and Riche \cite{GinzburgRiche} shows we have at least a complex symplectic version of the implosion (though the hyperk\"ahler structure has not been shown yet, it is expected to exist, and we will refer to it as such).
The hyperk\"ahler version takes inspiration from both the (real) symplectic and algebro-geometric version, but is overall a step up in complexity.  This is parallel to other operations in equivariant geometry that come in three flavours, such as symplectic reduction \cite{marsden-weinstein.1974}, geometric invariant theory \cite{GIT}, and hyperk\"ahler reduction \cite{HKLR.1987}, or (for abelian actions) symplectic and algebraic cutting \cite{lerman.1995,edidin-graham.1998} and hyperk\"ahler modifications \cite{dancer-swann.2006}.  

\medskip
In this note we expand this viewpoint by introducing the complex symplectic version of contraction, which so far was understood only in an algebraic \cite{popov.1986} or real symplectic \cite{HMM} context
(see \S 5 for a survey of these constructions). Unlike reduction or implosion, contraction does not change the dimension of the space one starts from, but gives rise to an additional action of the maximal torus, commuting with the actions of $K$ or $G$.

\medskip
It is common to all these operations that they can be reduced to a universal version, which is the result of applying them to the cotangent bundle $T^*K$ (in the real symplectic case), the complex group $G$ (in the algebraic case), or the complex cotangent bundle $T^*G$ (in the complex symplectic case).  The resulting spaces are often familiar -- the symplectic reductions of $T^*K$ are coadjoint $K$ orbits (or flag varieties for $G$), for example.
For abelian $K$ the symplectic cuts of $T^*K$, or algebraic cuts of $G$, give toric varieties, and the hyperk\"ahler modifications of $T^*G$ are the hypertoric varieties.

\medskip
This in turn means that they naturally fit in with the concept of Moore-Tachikawa categories, whose objects are groups, morphisms are spaces with group actions, and composition is done by reduction of products.  Indeed, the universal implosions are morphisms in this category between a group and its maximal torus, and the universal contraction is the composition of this morphism with itself, suitably interpreted.

\section{Hyperk\"ahler implosion}
In this section we survey symplectic and hyperk\"ahler implosion.

Let $T$ be a maximal torus for a compact Lie group $K$, with Lie algebra $\tf \subset \kf$.  By using the weight decomposition for the adjoint representation, $\tf$ has a canonical complement, and hence we have ${\tf^*\subset \kf^*}$, and also $\tf^*_+\subset \kf^*$ for the choice of a positive Weyl chamber $\tf^*_+$.  If $M$ is a (real) Hamiltonian $K$-manifold, with moment map $\mu:M\rightarrow \kf^*$, the symplectic implosion $M_{\impl}$ is (as a set) $\mu^{-1}(\tf^*_+)/\thicksim$, where $$x\sim y \Leftrightarrow \mu(x)=\mu(y) \ \text{and}\ y \in [K_{\mu(x)}, K_{\mu(x)}].x.$$
It is typically not a smooth manifold, but a stratified Hamiltonian space -- the residual map to $\tf^*_+\subset\tf^*$ is the moment map for a $T$-action.  Applied to $T^*K$ (as a $K$-space with the action induced from $K$ acting on itself as $k.h=hk^{-1}$), this operation gives the universal implosion \begin{equation}\label{implequiv}(T^*K)_{\impl}=K\times\tf^*_+/\thicksim.\end{equation} Besides the $T$-action, $(T^*K)_{\impl}$ still has the residual $K$-action that comes from multiplication on the left, since if a space has two commuting actions by compact groups $K$ and $L$, the implosion by $K$ still has an action of $L$, and moreover implosion and symplectic reduction commute: \begin{equation}\label{commute}M_{K-\impl}\symp L\cong (M\symp L)_{K-\impl}.\end{equation}

\medskip
This last property also explains the relevance of the universal implo\-sion: for any symplectic $K$-manifold, one has a canonical equivariant iso\-morphism 
\begin{equation}\label{trivial}M\cong \left(M\times T^*K\right)\symp \Delta K,\end{equation} where the quotient in the right-hand side uses the diagonal action of $K$ acting on $M$ and on $T^*K$ by multiplication from the right -- this quotient space still inherits a $K$-action coming from the left action of $K$ on $T^*K$.

\medskip
Also for GIT quotients and complex symplectic reduction, one has an analogue of (\ref{trivial}), with the role of $T^*K$ now played by respectively $G=K_\C$ or $T^*G$ (for simplicity of notation we shall indicate both symplectic reduction and GIT quotients by $\symp$, and hyperk\"ahler and complex symplectic reduction by $\hkq$).  Remark that this is not quite true for the metric aspects in K\"ahler or hyperk\"ahler quotients: there is no known metric on $G$ or $T^*G$ that would make the analogues of (\ref{trivial}) an isometry.

\medskip 
Because of the commutativity of implosion and reduction, as in (\ref{commute}), when applying the former to both sides of (\ref{trivial}), we obtain
$$M_{\impl}\cong (M\times (T^*K)_{\impl})\symp \Delta K,$$ 
which shows how to recover any implosion from the universal implosion.  

 \medskip
 The universal symplectic implosion $(T^*K)_{\impl}$ can be identified (as a stratified $K\times T$ space) with $G\symp U =\overline{G/U}^{\aff}=\Spec\left(k[G]^U\right)$, for $U$ the unipotent radical of the Borel subgroup of $G$ determined by the maximal torus $T_{\C}$ and the positive Weyl chamber $\tf_+^*$.  Even though $U$ is not a reductive group, the invariant ring $k[G]^U$ is still finitely generated, which makes this definition sensible.

 \medskip
 This is done as follows (see Section 6 in \cite{GJS} and Appendix in \cite{HMM}): given the choice of a finite set of generators $\Pi$ for the semigroup of dominant weights of $G$, one can embed \begin{equation}\label{G/Uembed} G\symp U \hookrightarrow E=\oplus_{\varpi\in \Pi} V_{\varpi}.\end{equation} Here $V_{\varpi}$ is the irreducible representation of $G$ with highest weight $\varpi$, and we moreover choose highest-weight vectors $v_{\varpi}\in V_{\varpi}$ and $K$-invariant Hermitian inner products on the $V_{\varpi}$ such that $\norm{v_{\varpi}}=1$.  The image of $G\symp U$ is now the closure of the $G$-orbit of $\sum_{\varpi\in\Pi} v_{\varpi}$, and $G\symp U$ inherits a K\"ahler structure from the embedding in $E$.
 
 \medskip
 We can also consider an extra action of $T_{\C}$ on $E$ (commuting with the $G$ action), acting diagonally on $V_{\varpi}$ with weight $-\varpi$, and look at the affine toric variety inside $G\symp U$ that is the closure of the $T_{\C}$-orbit of $\sum_{\varpi\in\Pi} v_{\varpi}$.  The image of the $T$-moment map of this toric variety is $-\tf^*_+$, and the non-negative part of the toric variety provides a section of this moment map.  We can now use minus this section to create a subjective map from $K\times \tf_+^*$ into $G\symp U\subset E$, which (using (\ref{implequiv})) induces an isomorphism of symplectic stratified spaces $(T^*K)_{\impl}$.

 \medskip
 This identification is the basis for the correspondence between sym\-plectic reduction and quotients in non-reductive GIT, though this only concerns those quotients on the algebraic side where the action of the group $U$ extends to one of $G$, in which case the invariant ring will be finitely generated.

 \medskip
 When it comes to a hyperk\"ahler or complex symplectic analogue of implosion, as it too commutes with complex symplectic reduction, the key case is again the implosion of $T^*G$. We recall that this
 space is complex symplectic with $G \times G$-action, and in fact, by the work of Kronheimer \cite{Kronheimer:cotangent}, is even known to be hyperk\"ahler. One way to view its implosion is as the complex symplectic reduction of $T^*G$ by $U$, acting from the right.  If we identify $T^*G\cong G\times \g^*$, then the complex moment map for the $U$-action is given by the projection of the second factor on $\un^*$, where $\un$ is the Lie algebra of $U$.  The level set is the product affine variety $G\times \un^{\circ}$ (where $\un^{\circ}\subset \g^*$ is the annihilator of $\un$), which has a $G\times U$-action, but not a $G\times G$-action, and as a result it is not directly clear that the invariant ring $k[G\times \un^{\circ}]^U$ is finitely generated.  Alternatively, remark that if $(G\times \un^{\circ})\symp U=\Spec\left(k[G\times \un^{\circ}]^U\right)$ were well-defined as an affine variety, it would contain the cotangent space $T^*(G/U)$, and hence the question is whether $T^*(G/U)$ is a quasi-affine variety.

 \medskip
 For $K=\SU(n)$, the question of finite generation was answered affirmatively by Dancer, Kirwan and Swann in \cite{DKS}, where it was shown that the resulting variety could also be constructed through quivers as a hyperk\"ahler quotient of a vector space by a compact group (which in particular showed that the space is not just complex symplectic, but in fact hyperk\"ahler on its smooth locus).  In particular, the following quiver diagram is considered: 
 \begin{center}
 \begin{tikzcd}
 {\color{lightgray}0 } \ar[dd,color=lightgray,xshift=.1cm]& {\color{lightgray}0 }\ar[dd,color=lightgray,xshift=.1cm]&\dots &{\color{lightgray}0 } \ar[dd,color=lightgray,xshift=.1cm] & \C^n \ar[dd,xshift=.1cm,"\beta_{n-1}"]\\ & & & & \\
 \C \ar[r,yshift=.1cm,"\alpha_1"]\ar[uu,color=lightgray,xshift=-.1cm] & \C^2 \ar[r,yshift=.1cm,"\alpha_2"]\ar[l,yshift=-.1cm,"\beta_1"]\ar[uu,color=lightgray,xshift=-.1cm] & \dots \ar[r, yshift=.1cm, "\alpha_{n-3}"]\ar[l,yshift=-.1cm,"\beta_2"] & \C^{n-2} \ar[r,yshift=.1cm,"\alpha_{n-2}"] \ar[l,yshift=-.1cm,"\beta_{n-3}"]\ar[uu,color=lightgray,xshift=-.1cm]&\C^{n-1}. \ar[l,yshift=-.1cm,"\beta_{n-2}"] \ar[uu,xshift=-.1cm,"\alpha_{n-1}"]\\
 \end{tikzcd}
 \end{center}
 (We follow the presentation of the quiver given in \cite{wang.2021} here, including the vacuous light-coloured part, to indicate this can be understood as a special case of a framed and doubled quiver, as in the context of Nakajima quiver varieties.)
 One can take the affine hyperk\"ahler quotient (at zero for all moment maps) of $$M=\{(\alpha_i,\beta_i)\}=\bigoplus_{i=1}^{r-1}\Hom(\C^i,\C^{i+1})\oplus \Hom(\C^{i+1},\C^i)$$ by the natural action of $\widetilde{H}=\prod_{i=1}^{n-1}\Un(i)$, to obtain the nilpotent cone inside $\sln(n)\cong\sln(n)^*$.  When one takes the hyperk\"ahler quotient by $H=\prod_{i=1}^{r-1}\SU(i)$, however, one obtains an affine variety containing $T^*(\SL(n)/U)$ as a dense open subvariety of codimension at least two (Theorem 7.18 in \cite{DKS}).  As a result, $k[\SL(n)\times \un^{\circ}]^U$ is finitely generated, and $$M\hkq H=\Spec\left(k[\SL(n)\times \un^{\circ}]^U\right)=\left(\SL(n)\times\un^{\circ}\right)\symp U,$$ where the quotient is taken in the sense of non-reductive GIT.
 
 \medskip
 The finite generation of $k[G\times \un^{\circ}]^U$ for general reductive $G$ was established by Ginzburg and Riche (Lemma 3.6.2 in \cite{GinzburgRiche}), and further studied by Ginzburg and Kazhdan in \cite{ginzburg-kazhdan.unpubl,ginzburg-kazhdan.2022}.  We shall refer to the resulting affine space, as a singular complex symplectic variety, as the (right) universal implosion $Q_G=(G \times \un^{\circ})\symp U$.  Remark that $Q_G$ is always normal, following Proposition 1 in \cite{vinberg.popov-1972}.

 \medskip 
 A key ingredient in the work of Ginzburg-Riche and Ginzburg-\linebreak Kazhdan is an action of the Weyl group $W_G$ of $G$ on $Q_G$.  In the real symplectic or algebraic case, we have an action of $K\times T$ (or its complexification) on $T^*K_{\impl}\cong G\symp U$, but in the complex symplectic case $Q_G$ has an action of $G\times(W_G \ltimes T_{\C})$.  From the point of view of the quiver-construction in type $A_n$ of \cite{DKS}, this action of $W_G$ was constructed recently by Wang in \cite{wang.2021}.  Ginzburg-Kazhdan also conjectured that $Q_G$ has symplectic singularities (in the sense of \cite{beauville.2000}). This
 was proved for type $A_n$ using the quiver construction by Jia \cite{jia.2021,jia.2022}, and in the general case by Gannon \cite{gannon.2023}.

\medskip 
Finally, Ginzburg and Kazhdan establish an affine embedding of $Q_G$
(Corollary 7.2.3 in \cite{ginzburg-kazhdan.unpubl}, which follows from the proof of Lemma 3.6.2 in \cite{GinzburgRiche}):

\begin{proposition}
There is a $W_G$-equivariant closed embedding of affine varieties
\begin{equation}\label{GK-embed} Q_G\rightarrow \left(\g^*\oplus \tf_\C^*\right)\times \prod_{w\in W_G} G\symp U.\end{equation}
\end{proposition}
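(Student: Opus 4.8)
The plan is to assemble the embedding \eqref{GK-embed} out of three families of maps, and then to recognise that the only substantial point is the surjectivity of the associated map of coordinate rings, which is exactly the finite-generation statement. Throughout I identify $Q_G=\Spec\left(k[G\times\un^{\circ}]^U\right)$, where $U$ acts on $G$ by right multiplication and on $\un^{\circ}\subset\g^*$ by the coadjoint action. First I would record the left moment map $\mu_{\g}\colon Q_G\to\g^*$, $[g,\xi]\mapsto\operatorname{Ad}^*_g\xi$, which descends to the quotient and is $G$-equivariant for the residual left $G$-action. Next, writing $\xi|_{\tf_\C}$ for the restriction of a functional to $\tf_\C\subset\g$, the assignment $\mu_{\tf}\colon[g,\xi]\mapsto\xi|_{\tf_\C}$ also descends: since $\un^{\circ}$ annihilates $\un$ and $\operatorname{Ad}_{u^{-1}}H-H\in\un$ for $H\in\tf_\C$ and $u\in U$, the restriction $\xi|_{\tf_\C}$ is $U$-invariant, giving $\mu_{\tf}\colon Q_G\to\tf_\C^*$. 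Finally, the projection $G\times\un^{\circ}\to G$ is $U$-equivariant, so it induces the inclusion $k[G]^U\hookrightarrow k[G\times\un^{\circ}]^U$ and hence a map $\pi_e\colon Q_G\to G\symp U$ into the universal symplectic implosion.

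To obtain the factors indexed by $W_G$ I would use the action of $W_G$ on $Q_G$ coming from Ginzburg--Riche (and, in type $A$, from \cite{wang.2021}), setting $\pi_w\coloneqq\pi_e\circ w$ for each $w\in W_G$ and assembling
\[
\Phi=(\mu_{\g},\mu_{\tf})\times\prod_{w\in W_G}\pi_w\colon Q_G\longrightarrow \left(\g^*\oplus\tf_\C^*\right)\times\prod_{w\in W_G} G\symp U.
\]
The target carries the $W_G$-action that is trivial on $\g^*$, is the reflection action on $\tf_\C^*$, and permutes the implosion factors. Because the $W_G$-action on $Q_G$ is built from representatives of $N(T)/T$ acting on the $\un^{\circ}$ side, it commutes with the left $G$-action and leaves $\mu_{\g}$ invariant, while acting on $\mu_{\tf}$ by reflections; the relation $\pi_w(w'q)=\pi_{ww'}(q)$ then yields the permutation equivariance of the last factor, so $\Phi$ is $W_G$-equivariant.

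It remains to show that $\Phi$ is a closed embedding, equivalently that the induced map
\[
\operatorname{Sym}(\g)\otimes\operatorname{Sym}(\tf_\C)\otimes\bigotimes_{w\in W_G}k[G]^U\longrightarrow k[G\times\un^{\circ}]^U
\]
is surjective. Here I would use the algebraic Peter--Weyl decomposition $k[G]\cong\bigoplus_\lambda V_\lambda\otimes V_\lambda^*$ to write $k[Q_G]\cong\bigoplus_\lambda V_\lambda\otimes\left(V_\lambda^*\otimes k[\un^{\circ}]\right)^U$, the left $G$-action living on the $V_\lambda$ factors. In this picture the functions pulled back through $\pi_e$ are the $U$-fixed (highest-weight) lines $\bigoplus_\lambda V_\lambda\otimes(V_\lambda^*)^U$, those coming from the $\g^*$-factor are the pieces linear in $\xi$, namely $\mu_{\g}^*(X)[g,\xi]=\langle\xi,\operatorname{Ad}_{g^{-1}}X\rangle$, and those from the remaining $\pi_w$ are the extremal-weight functionals $w\cdot(V_\lambda^*)^U$.

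The main obstacle is precisely this surjectivity, which is the content of Lemma 3.6.2 of \cite{GinzburgRiche} (equivalently, the finite generation of $k[G\times\un^{\circ}]^U$); all the preceding steps are formal once it is in hand. I would prove it by analysing $\left(V_\lambda^*\otimes k[\un^{\circ}]\right)^U$ degree by degree in the $\un^{\circ}\cong\bmf$ variable: the extremal-weight functionals $\{w\cdot v_\lambda^*\}_{w\in W_G}$ span enough of $V_\lambda^*$ to detect every weight component, the linear functions $\mu_{\g}^*(X)$ generate the dependence on $\xi$, and Kostant's description $\g=\bmf\oplus\un^-$ together with the normality of $Q_G$ (Proposition~1 of \cite{vinberg.popov-1972}) controls the higher-degree relations. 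The delicate point --- and the reason a single Borel does not suffice, forcing the product over all of $W_G$ --- is that the highest-weight line alone fails to separate the $U$-orbits in $\un^{\circ}$, so one genuinely needs the full Weyl orbit of extremal functionals to conclude that $\Phi$ is injective with closed image.
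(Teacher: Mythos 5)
Your construction of the map is exactly the one the paper intends: the component to $\g^*\oplus\tf_\C^*$ is the moment map for $G\times T_{\C}$, and the component to the $w$-th factor is the composite of the $W_G$-action on $Q_G$ with the affinization of $T^*(G/U)\rightarrow G/U$, which is your $\pi_w=\pi_e\circ w$. Your verifications that $\mu_{\tf}$ and $\pi_e$ descend to $Q_G$ are correct. But note that the paper does not prove this proposition at all: it is quoted from Corollary 7.2.3 of \cite{ginzburg-kazhdan.unpubl}, resting on the proof of Lemma 3.6.2 of \cite{GinzburgRiche}. So the only question is whether your attempt to supply the missing proof actually succeeds, and it does not, in two places.

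First, your justification of $W_G$-equivariance --- that the $W_G$-action on $Q_G$ is ``built from representatives of $N(T_{\C})/T_{\C}$ acting on the $\un^{\circ}$ side'' --- mischaracterises the Gelfand--Graev action. Right translation by a representative $n_w$ does not normalise $U$ and does not preserve $G\times\un^{\circ}$, so no such naive action exists on $Q_G$; the action of \cite{GinzburgRiche} (and of \cite{wang.2021} in type $A$) is constructed by genuinely indirect means (differential operators, or quivers), and its commutation with the left $G$-action, the invariance of $\mu_{\g}$, and the reflection action on $\mu_{\tf}$ must be quoted from there rather than derived as you do. Second, and more seriously, your closing argument for surjectivity of the ring map is not a proof. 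The extremal-weight functionals $\{w\cdot v_\lambda^*\}_{w\in W_G}$ span only the extremal weight lines of $V_\lambda^*$ and do not ``detect every weight component'' (already for the adjoint representation the zero weight space is missed by all extremal vectors), and the sentence invoking $\g=\bmf\oplus\un^-$ together with normality to ``control the higher-degree relations'' is a gesture, not an argument: showing that products of the pulled-back functions exhaust $\left(V_\lambda^*\otimes k[\un^{\circ}]\right)^U$ is precisely the nontrivial content of Lemma 3.6.2 of \cite{GinzburgRiche}. (A smaller conflation: once surjectivity on coordinate rings is known, injectivity and closedness of $\Phi$ are automatic for affine varieties, so your final remark about needing the full Weyl orbit ``to conclude that $\Phi$ is injective'' is addressing a consequence, not an extra hypothesis.) In short: had you, like the paper, treated the surjectivity as a citation, your write-up would match the paper's treatment essentially verbatim; as a self-contained proof it has a genuine hole at exactly the one step that carries the mathematical weight.
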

Here the component going to $\g^*\oplus \tf_{\C}^*$ is the moment map for the $G\times T_{\C}$-action, and the component going to each copy of $G\symp U$ is given by the composition of the action of $w\in W_G$ on $Q_G$, and the map ${Q_G\rightarrow G\symp U}$ that is the affinization of the natural projection $T^*(G/U)\rightarrow G/U$. The $W_G$-action on the target is the coadjoint action on $\tf_\C$, and the permutation of the factors of $\prod_{w\in W_G} G\symp U$, while $\g$ is left invariant.  One can combine (\ref{GK-embed}) with (\ref{G/Uembed}) for each $G\symp U$ factor to obtain an embedding into affine space that provides the counterpart to $(\ref{G/Uembed})$

\section{A hypertoric variety related to $Q_G$}
Inside each universal symplectic implosion $(T^*K)_{\impl}\cong G\symp U$ one can embed an affine toric variety $X_G$, preserved by the action of $T_{\C}$, whose image under the $T$-moment map is $\tf^*_+$, and whose $K$-sweep is all of $(T^*K)_{\impl}$, see $[\S 7]$ of \cite{GJS} and the appendix to \cite{HMM}.  In \cite{DKS-Seshadri}, an analogue of this was shown for the universal hyperk\"ahler implosion for $\SL(n)$: there exists a morphism of the affine hypertoric variety $Y_G$, associated with the hyperplane arrangement given by the Weyl chambers for $G$, to $Q_G$, which is generically an embedding.  We generalise this here to the case of arbitrary reductive groups.

\medskip We briefly recall the construction of hypertoric varieties, following \cite{BD,proudfoot.2008}.  For our purpose, it suffices to restrict to affine ones.  These are determined by an arrangement $\mathcal{A}$ of $N$ distinct hyperplanes $H_1, 
\dots, H_N$ in $\tf^*$, all containing the origin.   For each hyperplane, make a choice of a minimal normal vector $\alpha_i\in  \Hom(\mathbb{G}_m, T_{\C})\subset\tf$.  
These choices determine a group homomorphism $\mathbb{G}_m^N\rightarrow T_\C$, which we require to be surjective.  It has kernel $L\triangleleft\mathbb{G}_m^N$, and if we let $\mathbb{G}_m^N$ act diagonally on $\C^N$, and via co-tangent lift on $T^*\C^N\cong \C^{2N}$, then the hypertoric variety is constructed as the complex symplectic reduction of $\C^{2N}$ by $L$: $$\mathfrak{M}(\mathcal{A})=\C^{2N}\hkq L.$$ Up to isomorphism this is independent of the choices of the $\alpha_i$.  We will denote elements of $\C^{2N}$ as $N$-tuples of pairs $(a_i,b_i)$, and will denote the equivalence class of such tuples representing elements in $\mathfrak{M}(\mathcal{A})$ as $[\underline{a},\underline{b}]$.

\medskip In the case of $\mathfrak{M}(\mathcal{A})=Y_G$, $N$ is half the number of roots of the associated root system of $G$.  It will be most convenient in this case if the choice of the normal vectors $\alpha_i$ is induced by the choice of a set of simple positive roots, such that the positive Weyl chamber $\tf^*_+$ is cut out by the equations $\alpha_i(x)\geq 0, \forall i$. As could be expected by the symmetry of the hyperplane arrangement for $Y_G$, this hypertoric variety also has an action of $W_G\ltimes T_{\C}$ (which the toric variety $X_G$ does not have).  This action plays a key role in the construction of the desired map $Y_G\rightarrow Q_G$, even though, surprisingly, this map will not be $W_G$-equivariant.

\begin{lemma}\label{Waction} The $T_\C$-action on the affine hypertoric variety $Y_G$ natural\-ly extends to a $W_G\ltimes T_\C$-action (where $W_G$ is the Weyl group of $G$) such that all moment maps for the action of $T$ and $T_\C$ on $Y_G$ are $W_G$-equivariant.
\end{lemma}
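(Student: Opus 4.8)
The plan is to realise $W_G$ through the symmetries of the defining hyperplane arrangement, lift these to the ambient complex symplectic space $\C^{2N}$, and then descend to $Y_G$. First I would record the combinatorial data of the $W_G$-action on the chosen normals: since $W_G$ permutes the root hyperplanes, for each $w\in W_G$ there is a permutation $\sigma_w$ of $\{1,\dots,N\}$ together with signs $\varepsilon_i(w)\in\{\pm1\}$ such that $w\cdot\alpha_i=\varepsilon_i(w)\,\alpha_{\sigma_w(i)}$. These assemble into an automorphism $\tilde\sigma_w$ of $\mathbb{G}_m^N$ (permute the factors by $\sigma_w$ and invert those with $\varepsilon_i(w)=-1$). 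The key structural point is that the defining sequence $1\to L\to\mathbb{G}_m^N\to T_\C\to 1$ is $W_G$-equivariant, with $\tilde\sigma_w$ covering the Weyl action on $T_\C$; in particular $\tilde\sigma_w(L)=L$. This is immediate once one notes that the cocharacter map $\mathbb{G}_m^N\to T_\C$ is induced by $x\mapsto\sum_i x_i\alpha_i$, which is $W_G$-equivariant by the displayed relation, so its kernel $L$ is preserved.

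Next I would lift $\tilde\sigma_w$ to a linear map $\rho(w)$ of $\C^{2N}$ acting on the symplectic planes: it carries the $i$-th pair $(a_i,b_i)$ into slot $\sigma_w(i)$, by the identity when $\varepsilon_i(w)=+1$ and by the rotation $(a_i,b_i)\mapsto(b_i,-a_i)$ when $\varepsilon_i(w)=-1$. Each block lies in $\SL(2,\C)$, so $\rho(w)$ preserves the complex symplectic form; a direct check shows that the $i$-th components $a_ib_i$ and $\tfrac12(\lvert a_i\rvert^2-\lvert b_i\rvert^2)$ of the complex and real moment maps are each transported into slot $\sigma_w(i)$ with the sign $\varepsilon_i(w)$, so both hyperk\"ahler moment maps transform by the signed permutation dual to $\tilde\sigma_w$. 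Since $\rho(w)$ normalises the $\mathbb{G}_m^N$-action through $\tilde\sigma_w$ and $\tilde\sigma_w(L)=L$, it preserves the zero level of the $L$-moment map and descends to a complex symplectic automorphism of $Y_G=\C^{2N}\hkq L$; restricting the dual of $\tilde\sigma_w$ to $\tf_\C^*$ (resp. $\tf^*$) recovers the Weyl action, so the induced maps on $Y_G$ make $\mu_\C$ and $\mu_\R$ equivariant and conjugate the residual $T_\C$-action by the Weyl action. This already gives the moment-map equivariance and the compatibility needed for a semidirect product.

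The main obstacle is that the maps $\rho(w)$ form only a projective family: because the rotation squares to $-\Id$, the assignment $w\mapsto\rho(w)$ fails to be a homomorphism by a $2$-torsion element of $\mathbb{G}_m^N$, and concretely for a simple reflection one finds that $\rho(s_i)^2$ equals the image in $T_\C$ of $\alpha_i(-1)$ rather than the identity. Thus a priori one obtains an extension of $W_G$ by $T_\C$, and the real work is to show that this extension splits, so that the group generated is genuinely $W_G\ltimes T_\C$. I would do this by correcting each lift, replacing $\rho(w)$ by $\tau_w\,\rho(w)$ for suitable $\tau_w\in T_\C$, and reducing to the Coxeter generators: one must solve $\tau_i\cdot s_i(\tau_i)=\alpha_i(-1)$ so that the corrected simple reflections are involutions, and then verify the braid relations. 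The first equation is solvable using a coroot neighbouring $\alpha_i$ in the Dynkin diagram (where the Cartan integer is odd), and I expect the braid relations to follow from the same signed-permutation bookkeeping; proving that the resulting $T_\C$-valued $2$-cocycle is a coboundary, uniformly in $G$, is the crux of the argument. These $T_\C$-corrections do not disturb the conclusions of the previous paragraph, since $T_\C$ acts by symplectomorphisms with equivariant moment map, so the corrected action still renders all moment maps $W_G$-equivariant.
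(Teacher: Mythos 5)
Your first two paragraphs reproduce the paper's strategy almost exactly: the paper likewise records the signed permutation data $w(\alpha_i)=s_{w,i}\,\alpha_{\sigma(w)(i)}$, uses it to define a $W_G$-action on $\mathbb{G}_m^N$ making the surjection onto $T_\C$ equivariant (hence preserving $L$), lifts this to $\C^{2N}$, and descends to $Y_G=\C^{2N}\hkq L$. The divergence is the block map at sign-flipped indices: the paper uses the plain swap $(a_i,b_i)\mapsto(b_i,a_i)$, which is an involution, so $w\mapsto\rho(w)$ is a homomorphism on the nose and an honest $W_G\ltimes\mathbb{G}_m^N$-action exists upstairs with nothing more to check -- on that route your entire third paragraph (the extension/splitting problem) never arises. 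Note, however, that your preference for the rotation $(a,b)\mapsto(b,-a)$ is not gratuitous: it is precisely the choice that sends $a_ib_i$ to $-a_ib_i$, as sign-equivariance of the complex moment map demands (the swap leaves $a_ib_i$ invariant), and it is the choice under which the defining level set $\bigl\{\sum_i c_i\,a_ib_i=0,\ (c_i)\in\lie l\bigr\}$ is manifestly preserved, since $\lie l$ is stable under the \emph{signed} rather than the unsigned permutation action. So your computation isolates a genuine subtlety that the paper's one-paragraph verification passes over, and the two block maps yield slightly different equivariance statements.

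The genuine gap is the crux you yourself flag, and it cannot be repaired as proposed: the $T_\C$-valued $2$-cocycle is in general \emph{not} a coboundary. Already for $G=\SL(2)$ one has $N=1$, $L=1$, $Y_G=\C^2$, and your equation $\tau\cdot s(\tau)=\alpha^\vee(-1)$ becomes $\tau\tau^{-1}=-1$, since $s$ acts on $T_\C\cong\C^*$ by inversion; it has no solution. More structurally, any algebraic automorphism $\gamma$ of $\C^2$ normalising the $T_\C$-action by inversion satisfies $\gamma^*a=b\,g(ab)$ and $\gamma^*b=a\,h(ab)$ by weight considerations, and imposing $\gamma^*(ab)=-ab$ forces $g,h$ to be constants with $gh=-1$, whence $\gamma^2=\alpha^\vee(-1)\neq\mathrm{id}$: no involutive lift flipping the complex moment map exists at all, whatever correction is applied. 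Your odd-Cartan-integer trick does work where an odd pairing is available (for instance, simply laced groups of rank at least two), but it fails whenever some simple root pairs evenly with the entire cocharacter lattice -- for the long simple root of simply connected $\SP(4)$ the pairings are $-2m+2n$, all even -- so there is no uniform splitting. Consequently the rotation-based construction produces an action of a generally nonsplit extension of $W_G$ by $T_\C$, not of $W_G\ltimes T_\C$; obtaining the semidirect product on the nose requires an involutive lift such as the paper's swap, at the price of the different transformation law of $a_ib_i$. As written, your argument establishes the moment-map equivariance but not the claimed group structure, and the missing step is not merely unverified but false in general.
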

This fact, and the proof below, are valid for any affine hypertoric variety determined by a hyperplane arrangement acted on by a finite group.
\begin{proof} 
The action of the Weyl group on $\tf^*$ permutes the hyperplanes in the arrangement, so we get a homomorphism $\sigma: W_G\rightarrow S_N$.  Moreover, for each $w\in W_G$ and $i\in\{1,\dots,N\}$ we get a sign $s_{w,i}\in\{\pm 1\}$ (a cocycle for the permutation action), determined by $w(\alpha_i)=s_{w,i}\alpha_{\sigma(w)(i)}$.  We obtain an action $\phi$ of $W_G$ on $\mathbb{G}_m^N$, given by $$\phi(w)(t_1, \dots, t_N)=(t_{\sigma(w)(1)}^{s_{w,1}}, \dots, t_{\sigma(w)(N)}^{s_{w, N}}),$$ which makes the morphism $\mathbb{G}_m^N\rightarrow T_\C$ (determined by the $\alpha_i$) $W_G$-equivariant.  As a result, $\phi$ preserves $L$.

\medskip Finally, we can extend the action of $\mathbb{G}_m^N$ on $\C^{2N}$ to an action of $W_G\ltimes \mathbb{G}_m^N$ (note that this does not work for the action of $\mathbb{G}_m^N$ on $\C^N$). Denoting elements of $\C^{2N}$ as $N$-tuples of pairs $(a_i,b_i)\in \C^2$, then each $w\in W_G$ permutes the pairs via $\sigma$, and moreover switches $(a_{\sigma(w)(i)},b_{\sigma(w)(i)})$ to $(b_{\sigma(w)(i)},a_{\sigma(w)(i)}))$ if $s_{w,i}=-1$.

\medskip We immediately get that, after taking the complex symplectic reduc\-tion of $\C^{2N}$ by $L$ (at zero-level of the moment maps), the action of $W_G\ltimes \mathbb{G}_m^N$ descends to an action of $\left(W_G\ltimes \mathbb{G}_m^N\right)\Big/ L \cong W_G\ltimes T_\C$ on the hypertoric variety.  Moreover, all the moment maps for the action of $T$ on $Y_G$ are $W_G$-equivariant. \
\end{proof}

\medskip We now want to define a morphism from $Y_G$ into the target of (\ref{GK-embed}), whose image is contained in the image of $Q_G$.  The components of this map into both $\g^*$ and $\tf_{\C}^*$ are just given by the moment map for the action of $T_{\C}$ on $Y_G$ -- remark that such a map from $Y_G$ into $\g^*$ is not $W_G$-invariant, whereas the component into $\g^*$ of (\ref{GK-embed}) is $W_G$-invariant (this at the end will result in the map $Y_G\rightarrow Q_G$ not being $W_G$-equivariant).

\medskip The components of the map into the various copies of $G\symp U$ will land in $X_G\subset G\symp U$ for each $w$.  If $Y_G$ were to be a smooth hypertoric variety, it would be covered by cotangent bundles to the various components of its extended core (see \cite{proudfoot.2008}, Remark 2.1.6).  These components are Lagrangian subvarieties, each isomorphic to $X_G$, and permuted by the action of $W_G$ on $Y_G$.  Under the real moment map their images are given by the (closed) Weyl chambers in $\tf^*$.  The various projections $T^*X_G\rightarrow X_G$ would extend to give maps $Y_G\rightarrow X_G\subset G\symp U$, and these would give the remaining components of the morphism from $Y_G$ into the target of (\ref{GK-embed}).

\medskip However, the variety $Y_G$ will be singular in general, and therefore this argument needs a bit more care.  We can use a smooth stratification of affine hypertoric varieties introduced in \cite{proudfoot.webster-2007}, \S 2.  Given a hyperplane arrangement $\mathcal{A}$,\ 
and a subset $F\subset \{1, \dots, N\}$, the affine subspace $\cap_{i \in F} H_i$ is denoted $H_F$.  If $F=\{i| H_i\subset H_F\}$, $F$ is called a \emph{flat}.  For every flat $F$,  $\mathcal{A}^F$ is defined to be the hyperplane arrangement $\{H_i\cap H_F|i\notin F\}$ in the affine space $H_F$.  Similarly,  $\mathcal{A}_F$ is defined to be the hyperplane arrangement $\{H_i/H_F|i\in F\}$ in $\tf^*/H_F$.  We have that the $\mathfrak{M}(\mathcal{A}^F)$ are all symplectic subvarieties of $\mathfrak{M}(\mathcal{A}^{\emptyset})=\mathfrak{M}(\mathcal{A})$. Finally, Proudfoot and Webster define $\overset{\circ}{\mathfrak{M}}(\mathcal{A}^F)$ to be 
$$\overset{\circ}{\mathfrak{M}}(\mathcal{A}^F)=\Big\{[\underline{a},\underline{b}]\in \mathfrak{M}(\mathcal{A})\ \Big|\ a_i=b_i=0\ \Leftrightarrow\ i\in F\Big\}$$ (this is always a smooth subvariety).
It is shown in Lemma 2.4 of \cite{proudfoot.webster-2007} that the decomposition $$\mathfrak{M}(\mathcal{A})=\bigsqcup_{F \ \text{flat}}\overset{\circ}{\mathfrak{M}}(\mathcal{A}^F)$$ is a stratification, with a normal slice to each $\mathfrak{M}(\mathcal{A}^F)$ provided by $\mathfrak{M}(\mathcal{A}_F)$.
Since all of these strata are even-dimensional, and two-dimen\-sional hypertoric varieties (when hyperplanes are not repeated) are always smooth, this immediately gives us
\begin{lemma} The singular locus of an affine hypertoric variety will have (complex) codimension at least four.
\end{lemma}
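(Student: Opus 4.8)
The plan is to read the singular locus directly off the Proudfoot--Webster stratification recalled above, exploiting that the normal slice to each stratum is again an affine hypertoric variety. First I would fix notation: writing $r=\dim_\C T_\C$, so that $\mathfrak{M}(\mathcal{A})$ has complex dimension $2r$, and for each flat $F$ setting $c_F=\dim_\C(\tf^*/H_F)$ for the codimension of $H_F$ in $\tf^*$. Since $\mathcal{A}^F$ is an arrangement in the $(r-c_F)$-dimensional space $H_F$, the stratum $\overset{\circ}{\mathfrak{M}}(\mathcal{A}^F)$ is open in $\mathfrak{M}(\mathcal{A}^F)$, hence has dimension $2(r-c_F)$ and complex codimension $2c_F$ in $\mathfrak{M}(\mathcal{A})$; dually the normal slice $\mathfrak{M}(\mathcal{A}_F)$, being the hypertoric variety of an arrangement in $\tf^*/H_F$, has complex dimension $2c_F$.

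Next I would invoke the local product structure supplied by the normal slice: near a point of $\overset{\circ}{\mathfrak{M}}(\mathcal{A}^F)$ the variety $\mathfrak{M}(\mathcal{A})$ looks like the (smooth) stratum times $\mathfrak{M}(\mathcal{A}_F)$, so it is smooth there exactly when the slice is smooth at its cone point. This reduces the whole problem to controlling which slices are singular. The strata with $c_F=0$ and $c_F=1$ must then be dispatched: for $c_F=0$ the slice is a point, and for $c_F=1$ the subspace $H_F$ is itself a hyperplane, so it is the intersection of codimension-one members all equal to $H_F$, and as the $H_i$ are distinct only one index can occur, forcing $|F|=1$. Thus $\mathcal{A}_F$ is a single unrepeated hyperplane in a line and $\mathfrak{M}(\mathcal{A}_F)\cong\C^2$ is smooth; equivalently, any two-dimensional hypertoric variety from a non-repeated arrangement is smooth.

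Having excluded $c_F\le 1$, I would conclude that the singular locus is contained in $\bigcup_{c_F\ge 2}\overset{\circ}{\mathfrak{M}}(\mathcal{A}^F)$, each stratum of which has complex codimension $2c_F\ge 4$, giving the stated bound. I expect the only genuine content to be the reduction-to-the-slice step, which rests squarely on the Proudfoot--Webster normal-slice description; the rest is the dimension bookkeeping above together with the observation that a codimension-two stratum is forced to have $|F|=1$. The subtlety to keep in view is precisely the role of the ``non-repeated'' hypothesis: repeated hyperplanes do arise in the slices $\mathcal{A}_F$ of larger flats and there produce genuine surface singularities (the $A_k$ singularities), but these sit in strata of codimension $2c_F\ge 4$ and so never threaten the estimate.
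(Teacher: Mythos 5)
Your proof is correct and is essentially the paper's own argument: the paper deduces the lemma in one breath from the Proudfoot--Webster stratification, noting that all strata have even codimension and that two-dimensional hypertoric varieties of non-repeated arrangements are smooth, which is exactly your reduction-to-the-slice step plus the observation that a codimension-one flat forces $\lvert F\rvert=1$; you have merely made the dimension bookkeeping explicit.

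One correction to your closing remark, though: repeated hyperplanes in fact \emph{never} arise in the slices $\mathcal{A}_F$ of an arrangement with distinct hyperplanes. For $i\in F$ one has $H_F\subset H_i$, so the preimage of $H_i/H_F$ under $\tf^*\to\tf^*/H_F$ is $H_i+H_F=H_i$ itself, and distinct $H_i$ therefore have distinct images. Moreover, a slice at a flat with $c_F\geq 2$ has dimension $2c_F\geq 4$, so it cannot carry a surface ($A_k$) singularity in any case; such surface singularities would require genuinely repeated hyperplanes in $\mathcal{A}$ itself, which the paper's hypotheses exclude. The actual singularities (e.g.\ of $Y_G$) come from singular slices of dimension at least four attached to distinct-hyperplane arrangements, which is precisely why the codimension bound of four is the right one. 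This slip sits in your commentary rather than in the proof, whose logic is unaffected.
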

 In particular we can focus on $\mathfrak{M}(\mathcal{A})^g=\overset{\circ}{\mathfrak{M}}(\mathcal{A})\cup \bigcup_{i\in \{1, \dots, N\} }\overset{\circ}{\mathfrak{M}}(\mathcal{A}^{\{H_i\}})$.

\medskip   Furthermore, for each $V\subset \{1, \dots, N\}$, we can look at the cone $\mathcal{C}_V$ cut out by $\alpha_i(x)\geq 0$ if $i\in V$ and $\alpha_i(x)\leq 0$ if $i\notin V$.  If this cone is top-dimensional, we say $V$ is \emph{broad}, and we define $$\mathcal{X}(V)=\Big\{\ [\underline{a}, \underline{b}]\ \Big|\  b_i=0 \ \text{if}\ i\in V\ \text{and}\ a_j=0\ \text{if}\ j\notin V\Big\}.$$ 
The $\mathcal{X}(V)$, for all broad $V$, are the components of the \emph{extended core}.  They are Lagrangian subvarieties of $Y_G$, that are toric for the action of $T_{\C}$, with at worst finite quotient singularities, and whose image under the real moment map is given by $\mathcal{C}_V$.  They will intersect the singular locus of $Y_G$, but we can restrict our attention to what happens in $\mathfrak{M}(\mathcal{A})^g$.  We put $\mathcal{X}(V)^g=\mathcal{X}(V)\cap \mathfrak{M}(\mathcal{A})^g$, which is the subvariety of the toric variety $\mathcal{X}(V)$ consisting of the open orbit and the codimension-one orbits.
\begin{lemma}\label{coverandcodim}
The cotangent bundles $T^*\mathcal{X}(V)^g$, for all broad $V$, are symplectic subvarieties that cover $ \mathfrak{M}(\mathcal{A})^g$.  Each of these $T^*\mathcal{X}(V)^g$ has (complex) codimension at least two in $\mathfrak{M}(\mathcal{A})$.
\end{lemma}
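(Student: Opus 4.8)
The plan is to realise each $T^{*}\mathcal{X}(V)^{g}$ as an explicit dense open subvariety of $\mathfrak{M}(\mathcal{A})$ inside the coordinate model $\C^{2N}$, and then to read off the covering and codimension statements from the stratification of \cite{proudfoot.webster-2007} together with the previous lemma. For broad $V$ the component $\mathcal{X}(V)$ is the quotient of $\{b_{i}=0\ (i\in V),\ a_{j}=0\ (j\notin V)\}$, and $\mathcal{X}(V)^{g}$ is its open orbit together with its codimension-one orbits; since a toric variety is smooth along its codimension-one orbits (the transverse slice being one-dimensional, hence smooth), $\mathcal{X}(V)^{g}$ is smooth. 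The assignment sending $[\underline{a},\underline{b}]$ to the class obtained by setting $b_{i}=0$ for $i\in V$ and $a_{j}=0$ for $j\notin V$ is a $T_{\C}$-equivariant morphism $\pi_{V}\colon\mathfrak{M}(\mathcal{A})\to\mathcal{X}(V)$: killing these ``fibre'' coordinates makes every product $a_{i}b_{i}$ vanish, so the complex moment map condition is preserved and the image lands in the extended core. Let $\mathcal{U}_{V}=\pi_{V}^{-1}\bigl(\mathcal{X}(V)^{g}\bigr)$, an open subset of $\mathfrak{M}(\mathcal{A})$.

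First I would show that $\pi_{V}$ exhibits $\mathcal{U}_{V}$ as the total space of $T^{*}\mathcal{X}(V)^{g}$, with $\mathcal{X}(V)^{g}$ the zero section and with the restriction of the holomorphic symplectic form of $\mathfrak{M}(\mathcal{A})$ equal to the canonical cotangent form; the fibre coordinates $b_{i}\ (i\in V)$ and $a_{j}\ (j\notin V)$ pair with the base coordinates exactly as a cotangent fibre. By the previous lemma the singular locus of $\mathfrak{M}(\mathcal{A})$ has complex codimension at least four, so $\mathfrak{M}(\mathcal{A})^{g}$ --- the union of the open stratum and the codimension-two strata $\overset{\circ}{\mathfrak{M}}(\mathcal{A}^{\{H_{i}\}})$ --- lies in the smooth locus. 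Hence $\mathcal{X}(V)^{g}$ is a smooth Lagrangian sitting inside a smooth open part of $\mathfrak{M}(\mathcal{A})$, and the desired identification is precisely Proudfoot's cotangent description of the smooth locus of a hypertoric variety (Remark 2.1.6 of \cite{proudfoot.2008}), restricted to $\mathcal{U}_{V}$. In particular $T^{*}\mathcal{X}(V)^{g}\cong\mathcal{U}_{V}$ is a smooth symplectic subvariety of $\mathfrak{M}(\mathcal{A})$.

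Next, the covering. Composing with the real moment map $\mathfrak{M}(\mathcal{A})\to\tf^{*}$, every point lies over some $x\in\tf^{*}$, which belongs to the closure $\overline{\mathcal{C}_{V}}$ of at least one top-dimensional cone, i.e.\ to some broad $V$. If $x$ is interior to the chamber, the point projects into the open orbit of $\mathcal{X}(V)$ and so lies in $\mathcal{U}_{V}$; if $x$ lies on a single wall $H_{i}$ --- the only wall-type occurring in the stratum $\overset{\circ}{\mathfrak{M}}(\mathcal{A}^{\{H_{i}\}})$ --- then in the transverse $A_{1}$-model $\C^{2}=T^{*}\C$ the point sits at the origin, which lies in $T^{*}\mathcal{X}(V)^{g}$ for either chamber abutting $H_{i}$, so the point lies in $\mathcal{U}_{V}$ for such a $V$. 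This is exactly why $\mathcal{X}(V)^{g}$ is taken to contain the codimension-one orbits and no deeper ones, and it shows that the $T^{*}\mathcal{X}(V)^{g}$ cover $\mathfrak{M}(\mathcal{A})^{g}$.

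Finally, the codimension bound. The identification above makes $\mathcal{U}_{V}$ an open subvariety of full dimension, and it is dense: a generic point, with all $a_{i},b_{i}\neq0$, is not in the extended core and projects into the open orbit of $\mathcal{X}(V)$. Its complement $\pi_{V}^{-1}\bigl(\mathcal{X}(V)\setminus\mathcal{X}(V)^{g}\bigr)$ is thus a proper closed subvariety, supported on the deeper (codimension $\ge 2$) orbits of $\mathcal{X}(V)$ and their cotangent directions, on the remaining extended-core components, and on the singular locus (codimension $\ge 4$); since every stratum of $\mathfrak{M}(\mathcal{A})$ has even codimension, this complement contains no divisor and so has codimension at least two (when $\dim\mathfrak{M}(\mathcal{A})=2$ it is empty and the statement is trivial). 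I expect the genuinely delicate step to be the first: upgrading the analytic, Weinstein-type neighbourhood picture to the precise algebraic statement that $\mathcal{U}_{V}\cong T^{*}\mathcal{X}(V)^{g}$ symplectically, uniformly over the singular $\mathfrak{M}(\mathcal{A})$. The safest route is to remain inside the $\C^{2N}$ model and match the cotangent structure orbit by orbit against the Proudfoot--Webster local models, the even-codimensionality of the stratification and the explicit $A_{1}$ slice keeping this verification finite.
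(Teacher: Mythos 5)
Your overall strategy---staying inside the $\C^{2N}$ model and identifying $T^*\mathcal{X}(V)^g$ explicitly in the coordinates $(a_i,b_i)$---is exactly the paper's strategy; indeed the paper's entire proof is the single identity
$$T^*\mathcal{X}(V)^g=\Big\{[\underline{a},\underline{b}]\ \Big|\ \text{at most one of the } a_i,\ i\in V,\ \text{and } b_j,\ j\notin V,\ \text{is zero}\Big\},$$
from which both the covering of $\mathfrak{M}(\mathcal{A})^g$ and the codimension bound are read off by counting vanishing coordinates. Your covering argument via the real moment map is sound in substance and even supplies detail the paper leaves implicit, though the case analysis is incomplete as phrased: a point of $\mathfrak{M}(\mathcal{A})^g$ can have real moment image on \emph{several} walls (indices with $\abs{a_i}=\abs{b_i}\neq 0$), not just the single wall of its stratum; the fix is to pick any broad $V$ with $\mu_{\R}(x)\in\mathcal{C}_V$, whereupon the forced conditions ($a_j\neq 0=b_j\Rightarrow j\in V$, etc.) hold automatically since $\alpha_j(\mu_\R(x))$ has the matching sign.

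The genuine gap is in your codimension step. The complement of $T^*\mathcal{X}(V)^g$ is \emph{not} a union of Proudfoot--Webster strata, so the parity argument (``every stratum has even codimension, hence the complement contains no divisor'') is a non sequitur. Concretely, a point with $a_{i_1}=a_{i_2}=0$ but $b_{i_1},b_{i_2}\neq 0$ for two indices $i_1,i_2\in V$ lies in the complement of $T^*\mathcal{X}(V)^g$ yet belongs to the \emph{open} stratum $\overset{\circ}{\mathfrak{M}}(\mathcal{A})$; for the same reason your description of the support of the complement (deeper toric orbits, other extended-core components, singular locus) is wrong, since it omits such open-stratum points, which need not even lie in the extended core. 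The correct and elementary argument, implicit in the paper's one-line proof, is that the complement is precisely the locus where at least two of the $N$ designated ``base'' coordinates vanish: each such pair of coordinate equations cuts codimension two on the (irreducible, normal) level set of the complex moment map in $\C^{2N}$, and this persists in the affine quotient, giving complex codimension at least two directly---no appeal to the stratification is needed for this part.
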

\begin{proof} It suffices to remark that $$T^*\mathcal{X}(V)^g=\Big\{ [\underline{a}, \underline{b}]\, \Big|\, \text{at most one of the}\ a_i, i\in V,\ \text{and}\ b_j, j\notin V,\, \text{is zero}
\Big\}.$$
\end{proof}

\medskip We now return our focus to the particular hypertoric variety $\mathfrak{M}(\mathcal{A})=Y_G$.  Here the cotangent bundles $T^*\mathcal{X}(V)^g\subset Y_G$, for broad $V$, are permuted by the $W_G$-action.  Moreover, for $\Omega=\{1,
\dots ,N\}$, we have $\mathcal{X}(\Omega)=X_G\subset Y_G$.  

\medskip The natural projection $T^*\mathcal{X}(\Omega)^g\rightarrow X_G$ extends to a morphism ${Y_G\rightarrow X_G}$ by the algebraic version of Hartogs' theorem, since affine hypertoric varieties are always normal (see see \cite{bellami-kuwabara.2012}, Proof of 4.11, or \cite{spenko-vandenberg.2021}, \S 4) and Lemma \ref{coverandcodim}.  Precomposing this map with the action of $w\in W_G$ on $Y_G$ gives us the desired components into the other copies of $X_G$, to 
finally obtain a morphism of affine varieties
\begin{equation}\label{hypertoricemb}Y_G\rightarrow \left(\g^*\oplus \tf_{\C}^*\right)\times \prod_{w\in W_G}X_G. \end{equation}

\begin{lemma}
The map (\ref{hypertoricemb}) restricted to $\overset{\circ}{Y_G}\subset Y_G$ is an embedding.
\end{lemma}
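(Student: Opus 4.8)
The plan is to make the two components of (\ref{hypertoricemb}) completely explicit in the hypertoric coordinates $[\underline a,\underline b]$ and then to check injectivity and injectivity of the differential. Writing $c_i=a_ib_i$, the component into $\g^*\oplus\tf_\C^*$ is the $T_\C$-moment map, which by the discussion above records precisely the tuple $(c_1,\dots,c_N)\in\tf_\C^*\subset\C^N$ (included into $\g^*$). For the factors indexed by $w\in W_G$: since the arrangement consists of the root hyperplanes, the broad subsets $V$ are in bijection with the Weyl chambers, hence with $W_G$, with $\mathcal X(V)=w\cdot X_G$; the corresponding component $\pi_V=\pi\circ w$ is, by its construction as the extension of the cotangent projection $T^*\mathcal X(V)^g\to\mathcal X(V)$, the monomial map recording the $L$-invariant monomials in the selected coordinates $\{a_i:i\in V\}\cup\{b_j:j\notin V\}$ — exactly the coordinate functions of the affine toric variety $\mathcal X(V)\cong X_G$. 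So the whole map is $\Phi\colon[\underline a,\underline b]\mapsto\big((c_i)_i;(\pi_V)_V\big)$.

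For injectivity I would argue function-theoretically: the pulled-back coordinate functions are the $c_i$ together with all $L$-invariant monomials lying in some $\C[\mathcal X(V)]$, and I claim these separate the points of $\overset{\circ}{Y_G}$. Any $L$-invariant monomial in the $a_i,b_i$ can, using $a_ib_i=c_i$, be written as a product of the $c_i$ and a monomial that is pure in each index (involving $a_i$ or $b_i$, but not both). Such a pure monomial has a well-defined sign pattern on its support ($+$ where $a_i$ occurs, $-$ where $b_j$ occurs), and $L$-invariance forces this to be the sign pattern of $\langle\lambda,\alpha_i\rangle$ for some $\lambda\in\tf^*$. The key combinatorial point is that $\lambda$, viewed in $\tf$ via the invariant form, lies in the closure of some Weyl chamber $\mathcal C_V$, so the sign pattern extends to the full pattern of $V$; hence the pure monomial uses only selected coordinates of $V$ and lies in $\C[\mathcal X(V)]$. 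Thus every invariant monomial is recovered from the image, and since these span $\C[Y_G]$ this forces equality of points.

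For the differential I would work chart by chart using Lemma \ref{coverandcodim}: $\overset{\circ}{Y_G}$ is covered by the cotangent charts $T^*\mathcal X(V)^g$, on each of which $\pi_V$ supplies the coordinates of the toric base $\mathcal X(V)$ and the moment map supplies the fibre coordinates. Over the open $T_\C$-orbit this is the standard identification of $T^*$ of a torus with the torus times $\tf_\C^*$, with $\mu$ the projection to the fibre, so $d\Phi$ is injective there. The subtlety — and the step I expect to be the main obstacle — is along the codimension-one orbits of $\mathcal X(V)^g$: there the infinitesimal $T_\C$-action drops rank by one, so $\mu$ fails to see one cotangent direction, namely the conormal to the orbit, i.e. the direction turning on the coordinate that vanishes on that orbit. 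This missing direction is exactly a selected coordinate of the adjacent chamber $V'$ across the corresponding wall, so it is recovered by $d\pi_{V'}$; assembling the two adjacent charts then gives injectivity of $d\Phi$ everywhere on $\overset{\circ}{Y_G}$.

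Finally, since $\overset{\circ}{Y_G}$ is smooth and $\Phi$ restricts there to an injective immersion separating points, it is an isomorphism onto its locally closed smooth image, i.e. an embedding; in fact the separation argument shows the pulled-back functions generate $\C[Y_G]$, so the conclusion can be upgraded to a closed embedding of all of $Y_G$ if desired. The two places that need genuine care are the combinatorial completeness of the chamber system in the injectivity step and the codimension-one rank drop handled by adjacent charts in the immersion step.
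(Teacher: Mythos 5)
Your argument is correct in substance, but it takes a genuinely different route from the paper. The paper works chart by chart: it covers $\overset{\circ}{Y_G}$ by the open sets $T^*\bigl(\mathcal{X}(V)\cap \overset{\circ}{Y_G}\bigr)\cong T^*T_\C$ for broad $V$, invokes the elementary fact that for any group $H$ the pair (projection, right moment map) identifies $T^*H$ with $H\times\mathfrak{h}^*$ -- so that on each chart the pair (component into the corresponding copy of $X_G$, $\tf_\C^*$-moment map) is already an embedding -- and then concludes via the $W_G$-permutation of the charts. You instead make the whole of (\ref{hypertoricemb}) explicit in monomial coordinates and prove surjectivity of the pullback on coordinate rings: every $L$-invariant monomial factors as a product of the $c_i=a_ib_i$ (which are indeed recovered from the $\tf_\C^*$-component, since $\tf_\C^*\hookrightarrow \C^N$ is dual to the surjection $\mathbb{G}_m^N\to T_\C$) with a pure monomial whose exponent vector is $\bigl(\langle\lambda,\alpha_i\rangle\bigr)_i$ for a character $\lambda$ of $T_\C$; since $\lambda$ lies in some closed chamber, the pure monomial is a selected monomial for the corresponding broad $V$, hence is pulled back from that $X_G$-factor. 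This buys more than the paper's argument: surjectivity on rings yields a closed immersion, and it renders your chart-by-chart worry about the rank drop of the differential along codimension-one orbits moot -- which is just as well, since your closing appeal to ``injective immersion implies embedding'' is false in general and should simply be deleted in favour of the ring-theoretic conclusion. Two points to tighten. First, your identification of the $w$-th component with the monomial map that forgets the anti-selected coordinates needs a sentence of justification: that map is a morphism defined on all of $Y_G$ that agrees with the cotangent projection on the dense chart, so since $Y_G$ is normal and irreducible it must coincide with the Hartogs extension used in the paper; this is easy but is the hinge of your whole computation. Second, be aware that your upgrade to a closed embedding of all of $Y_G$ is strictly stronger than anything asserted in the paper, which elsewhere describes $Y_G\to Q_G$ as only \emph{generically} an embedding, with the embedding locus matching that of \cite{DKS-Seshadri}; so the deep strata, where distinct vanishing patterns of $(\underline{a},\underline{b})$ can represent the same closed $L$-orbit, deserve an explicit re-check against your factorisation argument -- though nothing in it appears to use smoothness, and the paper's own $\SL(2)$ example, where $Y\cong\C^2$ embeds globally, is consistent with your stronger conclusion.
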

Remark that the locus on which the map $Y_G\rightarrow Q_G$ is an embedding is the same as the one described in \cite{DKS-Seshadri} using the quiver construction for $Q_{\SL(n,\C)}$.
\begin{proof}
Similar to Lemma \ref{coverandcodim}, we have that $\overset{\circ}{Y_G}$ is covered by\linebreak $T^*\left(\mathcal{X}(V)\cap \overset{\circ}{Y_G}\right)$, for broad $V$.  Each of the latter, in turn, is $$T^*\left(\mathcal{X}(V)\cap \overset{\circ}{Y_G}\right)\ =\ \Big\{\ [\underline{a}, \underline{b}]\ \Big|\ a_i\neq 0\ \text{if}\  i\in V\ \text{and}\ b_j\neq 0\ \text{if}\ j\notin V \Big\},$$ which is isomorphic to $T^*T_{\C}$.  It therefore suffices to remark that, for any Lie group $H$, the map $T^*H\rightarrow H\times \mathfrak{h}^*$, given by the natural projection onto $H$, and the moment map for the cotangent lift of the action $H\actson H$ by multiplication by inverses on the right, is an isomorphism.  Hence  the map from $T^*\left(\mathcal{X}(\Omega)\cap \overset{\circ}{Y_G}\right)$ to ${\left(T_\C\subset X_G\subset G\symp U\right)\times \tf_{\C}^*}$ is an embedding.  As $W_G$ permutes the broad $V$, the full map $(\ref{hypertoricemb})$ is an embedding on $\overset{\circ}{Y_G}$.
\end{proof}
\begin{proposition}
The image of (\ref{hypertoricemb}) is contained in the image of (\ref{GK-embed}), hence we have a morphism $Y_G\rightarrow Q_G$ that is an embedding on $\overset{\circ}{Y_G}$. 
\end{proposition}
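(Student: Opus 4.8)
The plan is to exploit that (\ref{GK-embed}) is a \emph{closed} embedding, so its image is a closed affine subvariety of $\left(\g^*\oplus\tf_\C^*\right)\times\prod_{w}G\symp U$; once I know the image of (\ref{hypertoricemb}) lies inside it, composing with the inverse isomorphism onto the image produces the sought morphism $Y_G\to Q_G$, and this morphism is an embedding on $\overset{\circ}{Y_G}$ by the preceding lemma, since a map whose composite with a closed embedding is an embedding is itself one. Write $\iota$ for (\ref{GK-embed}) and $\psi$ for (\ref{hypertoricemb}). Because $\im\iota$ is closed and $\overset{\circ}{Y_G}$ is dense in $Y_G$, one has $\psi(Y_G)\subseteq\overline{\psi(\overset{\circ}{Y_G})}$, so it is enough to prove the containment $\psi(\overset{\circ}{Y_G})\subseteq\im\iota$ over the dense open locus.

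First I would work chartwise: exactly as in Lemma \ref{coverandcodim}, $\overset{\circ}{Y_G}$ is covered by the pieces $T^*\!\big(\mathcal X(V)\cap\overset{\circ}{Y_G}\big)\cong T^*T_\C\cong T_\C\times\tf_\C^*$ for broad $V$. Using $T^*(G/U)\cong G\times_U\un^{\circ}$ together with the inclusion $\tf_\C^*\subset\un^{\circ}$, I would send a point $(t,\lambda)$ of the big cell $V=\{1,\dots,N\}$ to $[t,\lambda]\in T^*(G/U)\subset Q_G$, and treat the remaining charts by transporting this along the $W_G$-action. The easy components then match at once: the moment map $Q_G\to\g^*\oplus\tf_\C^*$ sends $[t,\lambda]$ to $(\lambda,\lambda)$, since $T_\C$ fixes $\tf_\C^*$ under the coadjoint action, and this agrees with the two $T_\C$-moment map components of $\psi$; and the factor indexed by $w=e$, namely the affinisation of the projection $T^*(G/U)\to G/U$, sends $[t,\lambda]$ to $tU$, whose image in $G\symp U\hookrightarrow\bigoplus_\varpi V_\varpi$ is $\sum_\varpi\varpi(t)v_\varpi\in X_G$, matching the projection $Y_G\to X_G$ of $\psi$.

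The hard part will be matching the remaining factors indexed by $w\neq e$, i.e.\ checking $\rho(w\cdot q)=\pi(w\cdot y)$, where $\rho\colon Q_G\to G\symp U$ and $\pi\colon Y_G\to X_G$ are the two projections and the relevant $W_G$-actions are the (nontrivial) Ginzburg--Kazhdan action on $Q_G$ and the chart-permuting action of Lemma \ref{Waction} on $Y_G$. This is precisely where the two embeddings refuse to be compatible in the naive equivariant way: the $\g^*$-component is $W_G$-invariant on the $Q_G$-side but is the non-invariant $T_\C$-moment map on the $Y_G$-side, which is what ultimately forces $Y_G\to Q_G$ to fail to be $W_G$-equivariant. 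I would reduce each $w$-factor to the $e$-factor via the relations ``the $w$-factor of $\iota$ at $q$ equals the $e$-factor of $\iota$ at $w\cdot q$'' and ``the $w$-factor of $\psi$ at $y$ equals the $e$-factor of $\psi$ at $w\cdot y$'', so that it suffices to understand how the $W_G$-action on $Q_G$ carries the standard chart $T^*T_\C\subset T^*(G/U)$ to the chart indexed by $w$, and to match this with the $W_G$-permutation of the broad cones $\mathcal C_V$ in $Y_G$; concretely one checks that $w\cdot[t,\lambda]$ still lies over $T_\C\subset G/U$ and projects into $X_G$ to the same point as $\pi(w\cdot y)$. Once all components agree on every broad chart, $\psi(\overset{\circ}{Y_G})$ lies in the closed set $\im\iota$, hence so does $\psi(Y_G)$, and the factorisation through $Q_G$ together with the embedding statement on $\overset{\circ}{Y_G}$ follows.
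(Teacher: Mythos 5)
Your proposal is correct and takes essentially the same route as the paper: the paper likewise uses that (\ref{GK-embed}) is a closed embedding to reduce to the single dense open chart $T^*\left(\mathcal{X}(\Omega)\cap\overset{\circ}{Y_G}\right)\cong T^*T_\C$, embeds it via $(t,\alpha)\mapsto \overline{(t,\alpha)}\in T^*(G/U)\cong (G\times \un^{\circ})/U$ using $\tf_\C^*\subset \un^{\circ}$, and then declares the agreement of the images under (\ref{GK-embed}) and (\ref{hypertoricemb}) ``straightforward to verify''. The only differences are cosmetic: density of the $\Omega$-chart alone makes your covering of $\overset{\circ}{Y_G}$ by all broad charts unnecessary, and the $w\neq e$ component check you outline (correctly flagging the failure of $W_G$-equivariance as the delicate point) is precisely the verification the paper leaves implicit.
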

\begin{proof}As (\ref{GK-embed}) is a closed embedding, it suffices to show this for the dense open subvariety $T^*\left(\mathcal{X}(\Omega)\cap\overset{\circ}{Y_G}\right)\cong T^*T_\C$.  We can embed $T^*T_\C$ naturally into $T^*(G/U)\subset Q_G$, by $$(t,\alpha)\in T_\C\times\tf_\C^*\mapsto \overline{(t,\alpha)}\in T^*(G/U)\cong (G\times \mathfrak{u}^{\circ})/U.$$ (We use here that $\tf_C^*$ naturally is a subspace of $\mathfrak{u}^\circ$.)    As it is straight\-forward to verify that the images of $T^*T_{\C}$ under (\ref{GK-embed}) and (\ref{hypertoricemb}) are identical, we are done.
\end{proof}
\section{Implosion and the Moore-Tachikawa category}
\label{MT}
In this section, we discuss the Moore-Tachikawa category $HS$ and explain how it gives a useful organising principle for implosion and contraction.

\medskip The objects of $HS$, as introduced in \cite{MT}, are complex semisimple groups and the morphisms
from $G_1$ to $G_2$ are complex symplectic varieties with linearised, Hamiltonian,
$G_1 \times G_2$-actions. In the setup of \cite{MT} one also assumes a 
commuting circle action acting on the complex symplectic form with
weight 2, but we do not need that for our discussion, and we will also allow for objects to be complex reductive groups.

\medskip
The composition $Y \circ X$ of morphisms $X \in {\Hom}_{HS}(G_1, G_2)$
and $Y \in {\Hom}_{HS}(G_2, G_3)$
is given by the complex symplectic quotient
\begin{equation} \label{quotient}
Y \circ X = (X \times Y) \hkq \Delta G_2
\end{equation}
where $\Delta G_2$ denotes the diagonally embedded $G_2$ in the
symmetry group $G_1 \times (G_2 \times G_2) \times G_3$ of $X \times Y$.
This quotient is complex symplectic with
residual $G_1 \times G_3$-action so lies in $\Hom_{HS}(G_1, G_3)$
as required.
If $K$ is a compact Lie group with $G=K_{\C}$, then the Kronheimer space $T^*G$
\cite{Kronheimer:cotangent} is complex symplectic
with $G \times G$-action and defines the identity element
in $\Hom_{HS}(G,G)$, due to the complex analogue of (\ref{trivial}).

\medskip More generally, we could consider a Moore-Tachikawa category in any setting where one has a reduction theory for group actions on spaces of some kind, provided that reduction by commuting actions commutes, and there exists an identity.  This is the case for Hamiltonian actions of compact groups on symplectic manifolds and stratified spaces, and linearised actions of reductive groups on varieties projective over an affine.  As mentioned above, an identity is missing in the case of K\"ahler reduction of K\"ahler manifolds by compact groups, or hyper-K\"ahler reduction -- in those cases one only has a semi-category.

\medskip In \cite{MT} it was conjectured that a choice of group $G$ would give a functor
$\eta_G : Bo_2 \rightarrow H$, where $Bo_2$ is the category of $2$-bordism,
mapping the cylinder to $T^*G$. The Riemann sphere $\C \PP^1$ maps to
the BFM space (or universal centraliser) \cite{BFM.2005}, while the once-punctured sphere maps to
the product of $G$ and the Slodowy slice. This is actually a symmetric monoidal functor, where the monoidal structure on HS is given by the product of groups and 
of spaces, and the dual of a group is just itself.
The functor thus defines a 2-dimensional TQFT.
Recently Ginzburg and Kazhdan  \cite{ginzburg-kazhdan.unpubl,ginzburg-kazhdan.2022} and Bielawski \cite{bielawski.2023} have
defined the image under $\eta_G$ of the three-times punctured sphere --- together with the above
data this determines the functor.
 Physically, $\eta_G(C)$ represents the Higgs branch of a $N=2$ supersymmetric theory 
associated to a punctured Riemann surface $C$ with defects located at the punctures,
associated to homomorphisms
$\su(2) \rightarrow \g$.

\medskip
As mentioned, in the original Moore-Tachikawa discussion the
groups are taken to be semisimple. In what follows we shall find it useful to extend the formalism to more general reductive groups.

\medskip
As discussed in \cite{DS} and \cite{DHK},
the universal hyperk\"ahler implosion for a compact $K$ with maximal torus $T$ may be viewed as an
element of $\Hom_{HS}(G, T_{\C})$. The process of imploding
a complex symplectic manifold $M$ with $G$-action, viewed as an element
of $\Hom_{HS}(1, G)$, to obtain
a manifold $M_{\impl}$ with $T_{\C}$-action,
is now exactly that of composition of morphisms with the universal implosion.
\begin{figure}[H]
  \centering
\begin{tikzpicture}
  \node (ag1) at (-2,-2.5) [gauge, label=above left:{$G=K_\C$}]{};
  \node (ag2) at (2,-2.5) [gauge, label=above right:{$T_\C$}]{};
  \node (ag3) at (0,-3.5) [gauge, label=below:{1}]{};
  \draw (ag1)--(ag2)--(ag3)--(ag1);
  \node at (0,-2.5) [label=above:{$(T^*G)_{\impl}$}]{};
  \node at (-1, -3) [label=below left:{$M$}]{};
  \node at (1, -3) [label=below right:{$M_{\impl}$}]{};
\end{tikzpicture}
\end{figure}
Of course, one could also define a similar picture with compact groups replacing
complex reductive groups, and with (real) symplectic manifolds playing the role
of morphisms. Again, the universal symplectic implosion $(T^*K)_{\impl}$
for a compact $K$ is an element of
$\Hom_{HS}(K,T)$, and implosion of a symplectic $K$-manifold as defined in
\cite{GJS} is just composition
with the universal implosion. Explicitly,
we have
\[
M_{\impl} = (M \times (T^*K)_{\impl}) \symp \Delta K.
\]

\section{Symplectic and algebraic contraction}

Returning to our definition of composition in $HS$, we see that if the middle group
$G_2$ is \emph{Abelian}, then in fact $Y \circ X$ has an action not just
of $G_1 \times G_3$, but of $G_1 \times G_2 \times G_3$, because the
\emph{anti}-diagonally embedded $G_2$ commutes with the diagonal $G_2$
and hence descends to the quotient.
In particular, if we have a space with $G$-action, we may
compose it with the (right) universal implosion (viewed as a morphism from
$G$ to $T_{\C}$) to obtain a space with $T_{\C}$-action, i.e. we form
the implosion. Then we may
compose again with the (left) universal
implosion considered as a morphism in the reverse direction, and get
a space with $G \times T_{\C}$-action, where the $T_{\C}$ factor arises because the middle group $T_{\C}$ in the composition is Abelian.
\begin{figure}[H]
\centering
\begin{tikzpicture}
  \node (ag1) at (-2,-2.5) [gauge, label=above left:{$K_\C$}]{};
  \node (ag2) at (0,-2.5) [gauge, label=above:{$T_\C$}]{};
  \node (ag3) at (2,-2.5) [gauge, label=above right:{$K_\C$}]{};
  \node (ag4) at (0,-3.5) [gauge, label=below:{1}]{};
  \draw (ag1)--(ag2)--(ag3)--(ag4)--(ag1);
\end{tikzpicture}
\end{figure}
In the setting of real symplectic manifolds with compact group actions, the resulting space is the symplectic contraction of \cite{HMM}.
The dimension calculation in the real case is as follows, denoting the contraction of $X$ by $X_{\syco}$:
\[
\dim X_{\impl} = \dim X + \dim (T^*K)_{\impl} - 2 \dim K = \dim X + \rank K
- \dim K
\]
\begin{eqnarray*}
  \dim X_{\syco} &=& \dim X_{\impl} + \dim (T^*K)_{\impl} - 2 \dim T \\
  &=& \dim X + \rank K - \dim K + (\dim K + \rank K)
      - 2 \rank K \\
&=& \dim X.
\end{eqnarray*}
So the contraction keeps the dimension the same but enhances the $K$-action to
a $K \times T$-action.

\medskip
As with implosion, the key example is the universal
contraction, i.e. the contraction of $T^*K$ 
with $K \times K \times T$-action.
We recover the contraction of $X$
by taking the product with the universal contraction and reducing by $K$.
In the symplectic case
\[
(T^*K)_{\syco} = ( (T^*K)_{\rm right \; \impl} \times (T^*K)_{\rm left \; \impl}) \symp T.
\]

\smallskip
Recall that the left $K$-action on $T^*K$ is
\[
h. (k,v) \mapsto (hk,v)
\]
with moment map
\[
\mu_L : (k, v) \mapsto -k.v
\]
where $k.v$ denotes the coadjoint action of $k$ on $v$. The right action is
\[
h.(k,v) \mapsto (kh^{-1}, h.v)
\]
with the moment map
\[
\mu_{R} : (k,v) \mapsto v
\]
so that points of $(T^*K)_{\rm right \; \impl}$ are represented by pairs $(k,v)$
with\linebreak $k \in K$ and $v$ in the closed positive Weyl chamber $\bar{C}$.
Points of $(T^*K)_{\rm left \; impl}$ are represented by pairs $(k,v)$ with $k \in K$ and $-k.v \in \bar{C}$. The
involution $(k,v) \mapsto (k^{-1},-k.v)$ intertwines the left and right actions and moment maps, and induces an identification of the corresponding implosions.

\medskip
In \cite{HMM}, a map $\Phi : T^*K \rightarrow (T^*K)_{\syco}$ is defined by
\begin{equation} \label{surj}
(k, v) \mapsto [(kh^{-1}, h.v), (h, v)]
\end{equation}
where $h$ is such that $h.v$ lies in the chosen closed positive Weyl chamber. We
use square brackets to denote the fact we are passing to the quotient by $T$.
Note that $h$ is uniquely defined up an element of   $K_{\sigma}$, where $K_{\sigma}$ is the stabiliser corresponding to the
fact $\sigma$ of $\bar{C}$ containing $h.v$. But we collapse by $[K_\sigma, K_\sigma]$ in forming the implosion, and by $T$ in taking the symplectic quotient to
form $(T^*K)_{\syco}$, so the map (\ref{surj}) is well-defined.  

\medskip
This map is surjective, as proven in \cite{HMM}, because given any\linebreak
$[(k,w), (h,v)]$ $ \in (T^*K)_{\syco}$, we have $w = h.v \in \bar{C}$ by the
0-level set condition  for the $T$-moment map.
Now we see that $$\Phi :(kh,v) \mapsto [(k, h.v),(h,v)] =[(k,w),(h,v)].$$
As a result, we can understand $(T^*K)_{\syco}$ or, in fact, any $M_{\syco}$ set-theoreti\-cally as $M_{\syco}=M/ \thicksim$, where $$x\sim y \Leftrightarrow \mu(x)=\mu(y) \ \text{and}\ y \in [K_{\mu(x)}, K_{\mu(x)}].x,$$ where $K_{\mu(x)}$ is the stabiliser in $K$ of $\mu(x)$ for the coadjoint action.

\medskip 
The algebraic counterpart of symplectic contraction is given by\linebreak Popov's notion of horospherical contraction \cite{popov.1986} for varieties with an action of a reductive group.  
This was initially defined for an affine $G$-variety $X$ as follows: the ring $k[X]$ is equipped with a poly-filtration obtained from the decomposition into isotypical components $k[X]_{(\lambda)}$ for $G$ (where $\lambda$ runs over the dominant weights).  By choosing a regular dominant co-weight $\rho\in \Hom(\Un(1),T)\cap \tf_+$, we can make this into an $\mathbb{N}$-filtered ring by putting $$k[X]_{\leq n}=\bigoplus_{\rho(\lambda)\leq n} k[X]_{(\lambda)}.$$  The horospherical contraction of $X$ is now determined by the associated graded of this algebra: $$X_{\hoco}=\Spec(\gr(k[X])),$$ which is independent of the choice of $\rho$.   (This definition naturally extends to a broader setting of varieties that are projective over an affine.)  The Rees construction moreover naturally creates a $\mathbb{G}_m$-equi\-variant flat family ${\pi:\chi\rightarrow \mathbb{A}^1}$, the generic fiber of which is isomorphic to $X$, and the fiber over $0$ is given by $X_{\hoco}$.

\medskip
The
universal horospherical contraction is the asymptotic semigroup $\As(G)$, as defined by Vinberg \cite{vinberg.1995b}.  This is the central fiber in a degeneration of $G$ known as the Vinberg monoid $S_G$ \cite{vinberg.1995a}.  This is obtained via the Rees construction as above, but without switching to an $\mathbb{N}$-filtration by a choice of $\rho$ as above -- this results in a flat fibration $\pi_G:S_G\rightarrow \mathbb{A}_G$ over a higher dimensional base, which is the (smooth) affine toric variety for the torus $T/Z_K$ (where $Z_K$ is the center of $K$) given by the cone spanned by the positive simple roots.  Vinberg shows that $S_G$ is a reductive monoid, with group of units $S_G^{\times}\cong (G\times T_{\C})/Z_G$.  A choice of a regular dominant $\rho$ as before gives rise to a morphism of monoids $\mathbb{A}^1\rightarrow \mathbb{A}_G$, and the flat family over $\mathbb{A}^1$ induced by this choice is the base-change of $\pi:S_G\rightarrow \mathbb{A}_G$ by this morphism.

\medskip
With a suitable (but not unique) choice of K\"ahler structure, we can identify $X_{\syco}$ and $X_{\hoco}$, see \S 5 and 6 in \cite{HMM}. This is done, following Harada-Kaveh \cite{harada-kaveh.2015}, by means of the gradient-Hamiltonian vector field \begin{equation}\label{grad-hamil}V_{\pi}=-\frac{\nabla \Re(\pi)}{\norm{\nabla\Re(\pi)}^2}\end{equation} on the smooth locus of the total space of the degeneration $\chi$, whose flow induces a continuous surjective map $X\rightarrow X_{\hoco}$ that generically is a symplectomorphism.  In general the results of \cite{harada-kaveh.2015}
just guarantee existence of this map, but in the case of a degeneration corresponding to a horospherical contraction, as we are considering, it can be made explicit.  What enables this is a reduction to the universal case\linebreak ${\pi_G:S_G\rightarrow \mathbb{A}_G}$, where the monoid structure of $S_G$, and Lie group decom\-positions for $S_G^{\times}$, make the flow for $V_{\pi}$ tractable.  This allows the identification of $\As(G)$ and $(T^*K)_{\syco}$, which in turn implies the same for $X_{\hoco}$ and $X_{\syco}$.

\section{Complex symplectic contraction}
\label{complex}
Given the interpretation for the contraction in the real symplectic and algebraic situations in terms of implosion, it is now natural to 
take the product of the complex symplectic implosion of a space $M$ and the left implosion of $T^*G$, and reduce this by the diagonal complex torus action.  We shall refer to this as the complex symplectic contraction, and denote it as $M_{\cosyco}$.

\medskip
The right implosion is the complex symplectic quotient, in the GIT sense,
of $T^*G = G \times \g^*$ by the maximal unipotent subgroup $U$ of $G$, that is,
 the GIT quotient $Q^{R} = (G \times \un^\circ) \symp U$. This can also
 be viewed as $(G \times \bmf) \symp U$ where
 $\bmf$ denotes the Borel algebra.
 We could also form the left implosion
 \[
Q^{L} = \Big\{ (g, v) \in G \times \g\ \Big|\ g.v \in \bmf \Big\} \Bigsymp U
\]
where $U$ now acts on the left, i.e. by translation in the $G$-factor. As before, the
involution $(g,v) \mapsto (g^{-1},-g.v)$
gives an identification
between left and right implosions.

\begin{definition}
We now define the universal complex symplectic contraction to be the complex-symplectic quotient
by the diagonal complex torus $T_{\C}$ of the product of right and left implosions:
\[
(T^*G)_{\cosyco} =(Q^R \times Q^L) \hkq_0 T_{\C}.
\]
\end{definition}
Now $M_{\cosyco}$ can be obtained by taking the product of $M$ with the universal example
$(T^*G)_{\cosyco}$ and reducing by $G$.

\medskip
The above discussion shows the following.
\begin{proposition}
$(T^*G)_{\cosyco}$
has the same dimension as $T^*G$ but the 
complex-symplectic $G \times G$-action is enhanced to a $G \times T_{\C} \times G$-action. 

Similarly $M_{\cosyco}$ has the same dimension as $M$ but its $G$-action is enhanced to a $G \times T_{\C}$-action.
\end{proposition}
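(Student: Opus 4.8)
The plan is to read off both assertions from the Moore--Tachikawa description of the contraction, treating the dimension count and the symmetry enhancement separately, and then to reduce the statement for a general $M$ to the universal case through the identity $M_{\cosyco}=(M\times (T^*G)_{\cosyco})\hkq\Delta G$.

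I would begin with the dimension of the universal contraction. Since $Q^R$ and $Q^L$ are two copies of the universal complex-symplectic implosion $Q_G=(T^*G)_{\impl}$, which contains $T^*(G/U)$ as a dense open subvariety, each has complex dimension
\[
\dim_{\C} Q^R=\dim_{\C} Q^L=\dim_{\C} T^*(G/U)=2\bigl(\dim_{\C} G-\dim_{\C} U\bigr)=\dim_{\C} G+\rank G,
\]
the complex counterpart of the real identity $\dim(T^*K)_{\impl}=\dim K+\rank K$. As $(T^*G)_{\cosyco}$ is the complex-symplectic reduction of $Q^R\times Q^L$ by the diagonal $T_{\C}$, which lowers complex dimension by $2\dim_{\C}T_{\C}=2\rank G$, I obtain
\[
\dim_{\C}(T^*G)_{\cosyco}=2\bigl(\dim_{\C} G+\rank G\bigr)-2\rank G=2\dim_{\C} G=\dim_{\C} T^*G.
\]
The general case follows from the same rule applied to the reduction by $\Delta G$, giving $\dim_{\C} M_{\cosyco}=\dim_{\C} M+\dim_{\C}(T^*G)_{\cosyco}-2\dim_{\C} G=\dim_{\C} M$.

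For the symmetry enhancement I would invoke the formal mechanism that drives contraction: the universal contraction is the composition $Q^L\circ Q^R$ of $Q^R\in\Hom_{HS}(G,T_{\C})$ with $Q^L\in\Hom_{HS}(T_{\C},G)$, so it lies in $\Hom_{HS}(G,G)$ and inherits the residual $G\times G$-action from the two outer factors. Because the middle group $T_{\C}$ is Abelian, the anti-diagonally embedded $T_{\C}$ commutes with the diagonal $T_{\C}$ that is being divided out, hence descends to the reduction and supplies an extra commuting factor; this produces the full $G\times T_{\C}\times G$-action. For a general $M$, forming $(M\times(T^*G)_{\cosyco})\hkq\Delta G$ consumes one of the two outer $G$-factors against the $G$-action on $M$ and leaves the other $G$ together with the new $T_{\C}$, which gives the asserted $G\times T_{\C}$-action on $M_{\cosyco}$.

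The only point needing genuine care, rather than bookkeeping, is that $Q^R$, $Q^L$ and their reductions are singular affine varieties, so the naive rule of subtracting twice the dimension of the group for a complex-symplectic quotient is valid only where the acting group has generically finite stabilisers and $0$ is a regular value of the complex moment map. I would discharge this by working on dense smooth loci: on $Q_G$ the open set $T^*(G/U)$ already realises the asserted dimension, and the diagonal $T_{\C}$-reduction can be checked on the corresponding product of cotangent bundles, where $T_{\C}$ acts with finite generic stabiliser and $0$ is a regular value of the moment map to $\tf_{\C}^*$. Since every space involved is irreducible --- indeed normal, by the normality of $Q_G$ --- equality of dimensions on a dense open propagates to the whole variety. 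The enhancement argument, by contrast, is purely formal and needs no smoothness input.
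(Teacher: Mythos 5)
Your proposal is correct and follows essentially the same route as the paper, which simply remarks that ``the above discussion shows'' the proposition: the dimension count is the complex analogue of the real computation $\dim X_{\syco}=\dim X_{\impl}+\dim (T^*K)_{\impl}-2\dim T=\dim X$ (using $\dim_{\C}Q_G=\dim_{\C}G+\rank G$), and the symmetry enhancement is the formal observation from the Moore--Tachikawa composition that the anti-diagonal copy of the Abelian middle group $T_{\C}$ descends to the quotient. Your final paragraph justifying the naive dimension rule on dense smooth loci of the singular varieties is a careful refinement the paper leaves implicit, but it does not change the argument.
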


\begin{remark}
The complex-symplectic reduction of $(T^*G)_{\cosyco}$ by the $T_{\C}$-action is the 
same as the product of two copies of the reduction of the implosion by $T_{\C}$ -- in particular if we reduce at level zero we obtain the product of two copies of the nilpotent cone.
\end{remark}
\medskip
Let us discuss some properties of the
universal example.
How could we mimic the map (\ref{surj}) from the symplectic case?
We want a map $T^*G \rightarrow (T^*G)_{\cosyco}= (Q^R \times Q^L) \hkq_0 T_\C$.
Given $(g,v) \in T^*G$, we can choose $h \in G$ with $h.v \in \bmf$
as every element of $\g$ is conjugate via the adjoint action to an element
of the fixed Borel algebra $\bmf$. So we could try to define
\begin{equation} \label{Psi}
  \Psi : (g,v) \mapsto [(gh^{-1}, h.v), (h,v)]
\end{equation}
and the right-hand side then lies in the 0-level set of the $T_\C$-action as required.

\medskip 
However, the problem here is that the condition $h.v \in \bmf$ does not
specify $h$ up to an element of the Borel subgroup $B$ in general.
It is therefore natural to replace the domain $T^*G$ by
$G \times \tilde{\g}$ where $\tilde{\g}$ is the Grothendieck simultaneous
resolution. That is, we consider
\[
G \times \tilde{\g} = \Big\{ (g,v, \bmf_1)\ \Big|\ g \in G, v \in \bmf_1 : \; \bmf_1 \in
{\mathcal B} \Big\}
\]
where $\mathcal B = G/B$ is the variety of Borel algebras in $\g$.

\begin{proposition}
The formula (\ref{Psi}) above defines a map
from
$G \times \tilde{\g}$ to the complex-symplectic contraction $(T^*G)_{\cosyco}$.
\end{proposition}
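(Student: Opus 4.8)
The plan is to verify two things: that for each point of $G\times\tilde{\g}$ the right-hand side of (\ref{Psi}) genuinely lands in the zero-level set defining $(T^*G)_{\cosyco}$, and, more importantly, that the resulting class is independent of the auxiliary choice of $h$. Given $(g,v,\bmf_1)\in G\times\tilde{\g}$, the datum of the Borel algebra $\bmf_1\ni v$ lets me choose $h\in G$ with $\operatorname{Ad}(h)\bmf_1=\bmf$; since $v\in\bmf_1$ this forces $h.v\in\bmf$. Hence $(gh^{-1},h.v)$ lies in the level set $G\times\bmf$ representing $Q^R$, and $(h,v)$ satisfies the condition $h.v\in\bmf$ defining $Q^L$. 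To see that the pair lies in the zero-level set of the diagonal $T_\C$, I would note that the residual $T_\C$-moment map on $Q^R$ sends $(k,w)$ to the $\tf_\C$-component of $w$, while on $Q^L$ it sends $(k,v')$ to the $\tf_\C$-component of $-k.v'$; evaluated on our two representatives these are $\pm$ the $\tf_\C$-component of $h.v$, and so cancel. This is exactly the complex-symplectic analogue of the $0$-level condition $h.v\in\bar C$ used in (\ref{surj}).

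The crux is independence of $h$, and this is precisely where passing from $T^*G$ to the Grothendieck resolution pays off. Whereas for $T^*G$ the condition $h.v\in\bmf$ left $h$ badly underdetermined, fixing $\bmf_1$ pins $h$ down by $\operatorname{Ad}(h)\bmf_1=\bmf$, so any other admissible choice is $h'=bh$ with $b\in N_G(\bmf)=B$. Writing $b=u_0t_0$ with $u_0\in U$ and $t_0\in T_\C$, I would show that replacing $h$ by $bh$ alters the representative only through symmetries that act trivially on the reduced space. Concretely, with $w=h.v$, acting by the diagonal $T_\C$-element $t_0$ sends $\big((gh^{-1},w),(h,v)\big)$ to $\big((gh^{-1}t_0^{-1},t_0.w),(t_0h,v)\big)$; applying the right $U$-action by $u_0$ on the first factor and the left $U$-action by $u_0$ on the second factor then yields $\big((g(bh)^{-1},(bh).v),(bh,v)\big)$, which is exactly the representative built from $h'$. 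Since the two $U$-actions are divided out in forming $Q^R$ and $Q^L$, and the diagonal $T_\C$ is the reduction group, all three moves fix the class, so $\Psi$ is well defined.

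The remaining point is that $\Psi$ is a morphism and not merely a set map. Here I would cover $\mathcal B=G/B$ by translates of the big cell, on each of which $h$ may be chosen by an algebraic local section of $G\to G/B$; the formula (\ref{Psi}) is then manifestly algebraic, and the independence just established shows the local expressions glue to a global morphism $G\times\tilde{\g}\to(T^*G)_{\cosyco}$. I expect the main obstacle to be the bookkeeping in the second paragraph: one must fix compatible conventions for the left and right $U$- and $T_\C$-actions on $Q^R$ and $Q^L$ (and the signs of their moment maps) so that the single decomposition $b=u_0t_0$ is absorbed by exactly one diagonal-torus move together with the two unipotent moves. Once these conventions are pinned down, the verification reduces to the short computation indicated above.
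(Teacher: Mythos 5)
Your proof is correct and takes essentially the same route as the paper: the key point in both is that $h$ is pinned down by $\operatorname{Ad}(h)\bmf_1=\bmf$ only up to an element of $B$, and the $B$-ambiguity, written as $b=u_0t_0$ with $u_0\in U$ and $t_0\in T_\C$, is absorbed because the round brackets are classes in the $U$-quotients $Q^R$ and $Q^L$ while the square bracket is the class under the diagonal $T_\C$. Your explicit $b=u_0t_0$ bookkeeping, the moment-map cancellation, and the algebraicity check via local sections of $G\to G/B$ merely make explicit what the paper's terser proof leaves implicit.
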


\begin{proof}
Given $(g,v\bmf_1) \in G \times \tilde{\g}$, there 
is now $h \in G$ with $h. \bmf_1 = \bmf$ (so $h.v \in \bmf$),
and $h$ \emph{is} unique
up to an element of $B$. Therefore the right-hand side
of (\ref{Psi}) is well-defined, as the terms in 
round brackets are elements of the quotients by 
the maximal unipotent $U$, and the square bracket denotes the equivalence class 
under the $T_\C$-action.
\end{proof}

\begin{remark}
We could generalise this construction to any $X$ with Hamiltonian $G_\C$-action,
and form
\[
\Big\{ (x, \bmf_1 ) \ \Big|\ \mu(x) \in \bmf_1 \big\} \subset X \times \mathcal B.
\]
If $X = T^*G$ then the moment map is projection to the second factor,
and identifying $\g^*$ with $\g$ we obtain $G \times \tilde{\g}$ as above. 
\end{remark}

\medskip
Recall from \cite{DKS} that every orbit of $U$ in the open set
\[
\Big\{ (g, v) \in G \times \bmf \ \Big|\  \tf-{\rm component \; of \;} v \; {\rm is} \;
\in \tf_{\rm reg} \Big\}
\]
(where $\tf_{\rm reg}$ consists of the regular elements in $\tf$) has a unique representa\-tive with $v \in \tf_{\rm reg}$.
We therefore have a set $Q^R_\circ = G \times \tf_{\rm reg}$ contained in $Q^R$, and of the same dimension as $Q^R$, as well as an analogous subset
$Q^{L}_{\circ} \subset Q^L$.
We may consider the locus $Q^{R}_\circ \times Q^{L}_{\circ}$ and its projection $(T^*G)_{\cosyco,\circ}$
to the symplectic quotient by $T_{\C}$. Points in this locus are represented
by pairs
\[
( (g_1, v), (g_2, w) ) \; : \; g_1, g_2 \in G \; : \; v, \;
g_2 . w \in \tf_{\rm reg}
\]
subject to the zero-level set condition for the $T_\C$-moment map
\[
v - g_2. w =0.
\]
\begin{proposition}
The locus $(T^*G)_{\cosyco,\circ}$ is contained in the image
of the map $\Psi$.
\end{proposition}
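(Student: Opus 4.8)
The plan is to produce, for each point of $(T^*G)_{\cosyco,\circ}$, an explicit preimage in $G\times\tilde{\g}$ under $\Psi$, and then simply to check that $\Psi$ sends it to the given point. Since the statement only asserts containment in the image, it is enough to exhibit one valid preimage per point; I will not need any surjectivity or any fact about $\Psi$ beyond the well-definedness already established.

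Fix a point of $(T^*G)_{\cosyco,\circ}$, represented as in the excerpt by a pair $((g_1,v),(g_2,w))$ with $(g_1,v)\in Q^R_\circ$ and $(g_2,w)\in Q^L_\circ$, subject to the zero-level condition $v-g_2.w=0$, so that $v=g_2.w\in\tf_{\reg}$. The candidate preimage can be read straight off the shape of $\Psi$. Writing $\Psi(g,v',\bmf_1)=[(gh^{-1},h.v'),(h,v')]$ with $h.\bmf_1=\bmf$, the left entry is $(h,v')$ and the right entry is $(gh^{-1},h.v')$. Matching the left entry to $(g_2,w)$ forces $v'=w$ and allows the choice $h=g_2$; matching the right entry to $(g_1,v)$ then forces $gh^{-1}=g_1$, that is $g=g_1g_2$, together with the identity $h.v'=g_2.w=v$, which is precisely the zero-level condition. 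This is what makes the recipe work.

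Concretely, I would set $\bmf_1=g_2^{-1}.\bmf$ and take $(g_1g_2,\,w,\,\bmf_1)\in G\times\tilde{\g}$. This does lie in $G\times\tilde{\g}=\{(g,v',\bmf_1)\mid v'\in\bmf_1\}$ because $w\in g_2^{-1}.\bmf$ is equivalent to $g_2.w\in\bmf$, which holds since $g_2.w\in\tf_{\reg}\subset\tf\subset\bmf$. In evaluating $\Psi$ one may use any $h$ with $h.\bmf_1=\bmf$, and $h=g_2$ is such a choice because $g_2.\bmf_1=g_2.(g_2^{-1}.\bmf)=\bmf$. With this choice the left entry of $\Psi(g_1g_2,w,\bmf_1)$ is $(g_2,w)$ and the right entry is $(g_1g_2\,g_2^{-1},\,g_2.w)=(g_1,v)$, so $\Psi(g_1g_2,w,\bmf_1)=[(g_1,v),(g_2,w)]$ reproduces the chosen point on the nose. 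Note the two entries land exactly on the regular representatives $(g_1,v)\in Q^R_\circ$ and $(g_2,w)\in Q^L_\circ$ (their distinguished $\g$-entries $v$ and $g_2.w$ lie in $\tf_{\reg}$), so no residual $U$-adjustment is needed, and since the match is literal no compensating $T_\C$-action is needed either.

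The only delicate points are bookkeeping ones, which I expect to be the main — though modest — obstacle. First, one must keep the left/right conventions for $Q^L$ and $Q^R$ straight, so that the left entry genuinely involves only $U$ acting in the $G$-factor (forcing $v'=w$) while the right entry carries the moved $\g$-value $h.v'$; reversing these would break the matching. Second, the legitimacy of taking $h=g_2$ rests entirely on having defined $\bmf_1=g_2^{-1}.\bmf$ so that $g_2.\bmf_1=\bmf$; this is exactly where passing to the Grothendieck resolution $\tilde{\g}$ is used, since there the Borel containing the value is recorded as part of the datum and $h$ is pinned down up to $B$, removing the ambiguity that obstructed the analogous map out of $T^*G$. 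Finally, the recipe a priori depends on the chosen representative of the point within its $T_\C$-orbit and $U$-classes, but because only containment in the image of $\Psi$ is claimed, a single valid preimage per point suffices and no independence check is required.
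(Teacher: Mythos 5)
Your proof is correct and takes essentially the same route as the paper: both exhibit the explicit preimage $(g_1g_2,\,w,\,\bmf_1)\in G\times\tilde{\g}$ with $\bmf_1=g_2^{-1}.\bmf$ (the paper phrases this as ``the Borel algebra containing $w$ such that $g_2.\bmf_1=\bmf$'') and verify that $\Psi$ sends it to $[(g_1,v),(g_2,w)]$ using the zero-level condition $v=g_2.w\in\tf_{\reg}\subset\bmf$. Your write-up merely makes explicit the bookkeeping the paper leaves implicit, namely the choice $h=g_2$ and the check $w\in\bmf_1$.
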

\begin{proof}
For such points, we can proceed as in the real symplectic case,
for now $v=g_2.w \in \tf \subset \bmf$ and $(g_1 g_2, w, \bmf_1) \in
G \times \tilde{\g}$ maps to
\[
[ (g_1, g_2.w), (g_2, w) ] = [ (g_1,v), (g_2,w) ],
\]
where we choose $\bmf_1$ to be  the Borel algebra containing $w$
such that $g_2. \bmf_1 = \bmf$. 
So we have surjectivity onto the locus represented by ${Q^{R}_\circ \times
Q^{L}_\circ}$.
\end{proof}

\begin{remark}
    We could also see this by recalling that we have an identification 
    $G \times_B \bmf \cong \tilde{\g}$ given by
    \[
    (g^{\prime},x) \mapsto (g^{\prime}.x,g^{\prime}B/B).
    \]
So $\Psi$ can be viewed as a map from
$G \times G \times_B \bmf$ to the contraction
given by the composition
\[
(g,g^{\prime},x) \mapsto (g,g^{\prime}.x,
g^{\prime}B/B)
\mapsto [(g g^{\prime},x), ((g^{\prime})^{-1}, g^{\prime}.x)]
\]
as we can take $h= (g^{\prime})^{-1}$ in the formula (\ref{Psi}). 
\end{remark}
\begin{remark}
As the complex symplectic implosion has an action
of $W_G\ltimes T_{\C}$, and $$\Delta T_{\C}\mathrel{\unlhd}\Delta W_G\ltimes (T_{\C}\times T_{\C})\subset (W_G\ltimes T_{\C})\times (W_G\ltimes T_{\C}),$$ 
we see that the contraction inherits an action of the
diagonally embedded Weyl group also.  In particular the universal complex symplectic contraction $T^*G_{\cosyco}$ has an action of $G\times G\times (W_G\ltimes T_{\C})$.
\end{remark}
\begin{proposition}
The real symplectic contraction sits naturally inside the complex-symplectic contraction.
\end{proposition}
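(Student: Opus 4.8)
The plan is to realise the real contraction inside the complex one by showing that the surjection $\Phi$ of (\ref{surj}) factors through the map $\Psi$ of (\ref{Psi}), after a natural lift of its domain $T^*K$ into $G\times\tilde{\g}$. First I would work on the dense regular locus $K\times\kf^*_{\reg}\subset T^*K$, where $\kf^*_{\reg}$ consists of those $v\in\kf^*$ that are regular, i.e. conjugate under $K$ to a regular element of $\tf^*$. For such $(k,v)$, choose $h\in K$ with $h.v$ in the open chamber $\bar{C}^\circ$; then $h$ is unique up to the maximal torus $T$, and since $T$ preserves the fixed Borel $\bmf$, the Borel algebra $\bmf_1:=h^{-1}.\bmf$ is independent of this choice. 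As $v=h^{-1}.(h.v)\in h^{-1}.\tf\subset\bmf_1$, the assignment $j:(k,v)\mapsto(k,v,\bmf_1)$ is a well-defined map into $G\times\tilde{\g}$, equivariant for $K\subset G$ acting by left multiplication.

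The key computation is that $\Psi\circ j=\Phi$ on this locus: in evaluating $\Psi(k,v,\bmf_1)$ one may take the group element carrying $\bmf_1$ to $\bmf$ to be the very same $h\in K\subset G$, whence $\Psi(k,v,\bmf_1)=[(kh^{-1},h.v),(h,v)]=\Phi(k,v)$. This immediately shows that $\Phi$ lands in the image of $\Psi$, and hence in $(T^*G)_{\cosyco}$, and that the induced map descends to the regular part of the real contraction $(T^*K)_{\syco}$, intertwining the residual $K\times K\times T$-action with the restriction of the $G\times G\times T_\C$-action. Conceptually this is the contraction-level shadow of the inclusion of the real symplectic implosion $(T^*K)_{\impl}\cong G\symp U$ into the complex implosion $Q^R$, carried out on both the left and right factors and then pushed through the reductions by $T\subset T_\C$.

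I would then extend the map to all of $(T^*K)_{\syco}$ by continuity, using that the regular locus is dense, that $\Phi$ is surjective, and that $(T^*G)_{\cosyco}$ is normal and separated, so that the continuous extension is unique. Finally one checks that the extended map is injective and a homeomorphism onto its image, still intertwining the actions; it is also convenient to record that it is compatible with the antiholomorphic involution $(g,v)\mapsto(g^{-1},-g.v)$ appearing above, which exhibits the real contraction as lying in a real form of $(T^*G)_{\cosyco}$.

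The hard part will be the injectivity in the last step: I must show that the complex-symplectic reduction by the non-compact torus $T_\C$, being a GIT-type quotient that identifies points with meeting orbit-closures, does not merge two distinct points of the real contraction. Equivalently, on the real locus the $T_\C$- and $U$-identifications defining $(T^*G)_{\cosyco}$ must restrict to exactly the $T$-action and the commutator-of-stabiliser relation defining $(T^*K)_{\syco}$. This is a Kempf--Ness-type statement: the Weyl-chamber and positivity conditions built into the real implosions single out closed $T_\C$-orbit representatives, so distinct real $T$-orbits stay distinct after complexification. I expect this to follow from the explicit description of the level sets together with the normality of $(T^*G)_{\cosyco}$, but it is the step that requires genuine care rather than formal manipulation.
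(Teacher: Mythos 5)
Your reduction of $\Phi$ to $\Psi$ on the regular locus is correct as far as it goes (for $v$ regular the choice of $h$ is unique up to $T$, which preserves $\bmf$, so $\bmf_1=h^{-1}.\bmf$ is well defined and $\Psi\circ j=\Phi$ there), but the proof has a genuine gap at exactly the point you flag yourself: the injectivity of the induced map $(T^*K)_{\syco}\rightarrow (T^*G)_{\cosyco}$. Showing that the GIT-type identifications in the complex quotient --- orbit closures of the non-compact $T_\C$ meeting, on top of the $U$-quotients defining $Q^R$ and $Q^L$ --- do not merge distinct points of the real contraction \emph{is} the content of the proposition that the real contraction ``sits inside''; deferring it as a Kempf--Ness-type statement you ``expect to follow'' leaves the theorem unproved. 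The extension step is also shakier than you suggest: off the regular locus, $h$ is unique only up to $K_\sigma$ for the face $\sigma$ of $\bar{C}$ containing $h.v$, and $K_\sigma$ does \emph{not} preserve $\bmf$, so your lift $j(k,v)=(k,v,\bmf_1)$ is not well defined there (this ambiguity is precisely why the paper's $\Psi$ is defined on the resolution $G\times\tilde{\g}$ rather than on $T^*G$). Extending ``by continuity'' from a dense open set is then not automatic: $(T^*K)_{\syco}$ is a stratified quotient space, not a normal variety, so the Hartogs/normality arguments you invoke for the target do not supply existence of the extension; you would have to check by hand that the $K_\sigma$-ambiguity is absorbed by the $[K_\sigma,K_\sigma]$-collapsing and the $T_\C$-identifications, which is again the same unproved separation statement.

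The paper's proof avoids all pointwise analysis by working at the level of implosions: by \S 4 of \cite{DKS-Arb}, the real symplectic implosion $G\symp U$ is the fixed-point locus of the $\C^*$-action on the complex-symplectic implosion induced by scaling the $\un^{\circ}$ factor. The $T_\C$-moment map is equivariant for this scaling, hence vanishes on the fixed locus, so the product of the left and right real implosions automatically lies in the zero level of the diagonal $T_\C$-moment map inside $Q^R\times Q^L$; passing to $T_\C$-quotients then gives the inclusion directly, with no need for the maps \eqref{surj} and \eqref{Psi} at all. Your intuition that ``the Weyl-chamber and positivity conditions single out closed orbit representatives'' is essentially what this fixed-point description makes precise, so if you want to rescue your route, the missing separation argument should be extracted from (or replaced by) that characterisation of the real implosion inside the complex one.
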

\begin{proof}
 Recall from \S 4 of \cite{DKS-Arb}  that the real symplectic implosion $G \symp U$ arises as the fixed point locus  of the $\C^*$-action on the hyperk\"ahler implosion induced from scaling in the $\un^{\circ}$ factor.
 The $T_{\C}$-moment map is equivariant for this action, so the product 
 of left and right real symplectic implosions lies in the zero locus of the $T_{\C}$-moment map in
 $Q^R \times Q^L$, and taking $T_{\C}$-quotients the statement follows. 
\end{proof}
\section{Examples}
\label{examples}

Let us consider the simplest case, where $K = \SU(2)$.

\medskip
We first look at the universal symplectic contraction. The universal symplectic implosion
is $\C^2$ with the flat K\"ahler structure and $U(2)$-action, so the contraction
is the symplectic quotient
\[
(\C^2 \times \C^2) \symp U(1),
\]
where $U(1)$ acts on the copies of $\C^2$ by scalar multiplication by
$e^{i \theta}$ and $e^{-i \theta}$ respectively.
Equivalently, this is the GIT quotient by $\C^*$ with action
\[
  \left( \begin{array}{c}
           v_1 \\
           v_2
         \end{array} \right) \mapsto
       \left( \begin{array}{c}
           t v_1 \\
           t v_2
              \end{array} \right), \;\;\; : \;\;\;
(w_1 \;\; w_2 ) \mapsto (t^{-1}w_1 \;\; t^{-1}w_2).
\]
The invariants are given by the entries of
\[
\left(\begin{array}{cc}
  X & Y \\
  Z & W
 \end{array}\right) = v \otimes w = (v_i w_j)_{i,j=1}^{2}.
\]
This is the affine variety in $\C^4$ defined by $XW - YZ =0$, which
is the symplectic contraction for $\SU(2)$.  Indeed, in this case the Vinberg monoid $S_{\SL(2)}$ is just the space of $2\times 2$ matrices, fibering over $\C$ by the determinant, so that the central fiber, the asymptotic semigroup,  consists of the singular matrices.  

\medskip To understand the flow for the vectorfield (\ref{grad-hamil}), where $\pi=\det$, we can argue as follows (cf.\cite{HMM}). Since both $\pi$ and the K\"ahler metric on $S_{\SL(2)}$ are invariant under left and right multiplication by matrices in $\SU(2)$, the flow for $V_{\pi}$ is equivariant for these actions as well.  Now consider the $KAK$-type Cartan decomposition for $S_{\SL(2)}^{\times}=\GL(2,\C)$ ($A$ here consists of the diagonal matrices in $\GL(2,\R)$ with positive entries).  If $x\in \SL(2,\C)$ is contained in $k_1Ak_2$ for some $k_1,k_2\in \SU(2)$, by the equivariance, the entire flow of $x$ under $V_{\pi}$ will be as well.  So it suffices to just look at elements $\begin{pmatrix}x&0\\ 0&x^{-1}\end{pmatrix}\in (A\cap \SL(2,\C))$. Level sets of $\pi$ restricted to $A$ are just given by components of hyperbola $$\left\{\begin{pmatrix}x&0\\ 0&y\end{pmatrix}\ \Big|\ x,y\in\R_+\ \text{with}\ xy=\lambda\right\},$$ 
and hence the flow lines for $V_{\pi}$, which are everywhere orthogonal to them, lie on hyperbola given by $\begin{pmatrix}x&0\\ 0&y\end{pmatrix}$ with $x^2-y^2=c$.  This in turn allows us to determine exactly what the flow for time $t=1$ does to elements in $A\cap \SL(2,\C)$:
$$\begin{pmatrix}x& 0\\ 0 & x^{-1}\end{pmatrix}\mapsto \begin{cases}\begin{pmatrix}\sqrt{x^2-x^{-2}} &0 \\ 0 &0\end{pmatrix} &\text{if}\ \ x\geq 1,\\ \ & \ \\ \begin{pmatrix}0 &0 \\ 0 & \sqrt{x^{-2}-x^{2}}\end{pmatrix} &\text{if}\ \ x\leq 1. \end{cases}$$  We can translate this cleanly to all of $\SL(2,\C)$ using the polar decom\-position as follows:
\begin{equation}\label{examplecontraction}\SL(2,\C)\ni B=UP=U\sqrt{B^*B}\mapsto U\sqrt{B^*B-\gamma \Id},\end{equation} where $\gamma$ is the smallest eigenvalue of $B^*B$.  From this one immediately sees that the moment maps for the actions of $\SU(2)$ given by multiplica\-tion on the left and the right are invariant under this map. Moreover, the stabilisers for these actions remain the same, except when $B^*B=\Id$, i.e. when $B\in\SU(2)$ (all of which get sent to the zero matrix).  So (\ref{examplecontraction}) just collapses $\SU(2)\subset\SL(2.\C)$, which, using the Cartan decomposition, we can think of as the zero section of $T^*\SU(2)\cong \SL(2,\C)$.  This is of course the exactly the same as what happens for $T^*\SU(2)_{\syco}$.

\medskip
In the hyperk\"ahler case, the universal implosion is $$Q_{\SL(2,\C)}=\HH^2= \Hom(\C, \C^2) \oplus
\Hom(\C^2,\C)$$ with $\SU(2)$-action
$(\alpha, \beta) \mapsto (g \alpha,\beta g^{-1})$
and $U(1)$-action\linebreak $(\alpha,\beta) \mapsto (e^{-i \theta} \alpha,e^{i \theta}\beta)$.

To see the action of the Weyl group $W_{\SL(2,\C)}=\{1,\gamma\}$, it is easiest to use the identification $\SL(2,\C)=Sp(2,\C)$.  If we take $J=\left(\begin{array}{cc} 0 &1\\ -1 &0\end{array}\right)$, then for any $g\in \SL(2,\C)$ we have $g^TJ=Jg^{-1}$.  We can now put $$\gamma.(\alpha,\beta)=\left((\beta J)^T, (J\alpha)^T\right),$$ i.e. $$\gamma.\left(\left(\begin{array}{c} \alpha_1\\ \alpha_2\end{array}\right), (\beta_1\ \beta_2)\right)=\left(\left(\begin{array}{c} -\beta_2 \\ \beta_1\end{array}\right), (\alpha_2\ -\alpha_1)\right).$$
This all indeed combines to given an action of $\SL(2,\C)\times \left(W_{\SL(2,\C)}\ltimes \C^{*}\right)$ on $\mathbb{H}^2$ (where $\gamma.\lambda=\lambda^{-1}$ for $\lambda\in\C^*)$. Note that the $\SL(2,\C)$-moment map $(\alpha \beta)_0$ (ie. the tracefree part of $\alpha \beta$) is Weyl-invariant, while the $\C^*$-moment map $\beta \alpha$ changes sign under the Weyl action.
The hypertoric variety $Y_{\SL(2,\C}$ is now just $\HH\cong \C^2$, which embeds into $Q_{\SL(2,\C)}$ by $(\alpha,\beta)\mapsto \left(\left(\begin{array}{c} \alpha\\ 0\end{array}\right), (\beta\ 0)\right)$ -- remark that this is not $W_{\SL(2,\C)}$-equivariant. 

\medskip
For the universal complex symplectic contraction, we need to consider the hyperk\"ahler quotient of two copies
of the implosion by the diagonal $U(1)$-action.
We can view this as $\C^4 \times (\C^4)^*$ with action\linebreak
$(v,w) \mapsto (t^{-1} v, t w)$. So we must impose the complex moment map
condition $\mu_{\C} := \langle v, w \rangle =0$ and factor out the complexified action.

\medskip
Again we can form the invariants $v \otimes w = (v_i w_j)_{i,j=1}^{4}$.
We obtain a hyperplane in the cone over the Segre variety defined by the vanishing of all $2 \times 2$ minors. If, for example, we set $v_1 = v_2 = w_3 = w_4 =0$
then the condition $\mu_\C =0$ is automatically satisfied and we
recover the (real) symplectic contraction given by coordinates
$v_3 w_1, v_3 w_2, v_4 w_1, v_4 w_2$ with one quadratic relation.

\medskip
More concretely, it is well-known that our
space can  be identified with the
Swann bundle ${\mathcal U} (Gr_{2} \C^4)$ of the Grassmannian of complex 2-planes in $\C^4$. Explicitly, $v \otimes w$ defines an element of the closure of the highest
root (or minimal) nilpotent orbit in $\sln(4, \C)$, and this is the total space of the Swann
bundle, see page 432 in \cite{swann.1991}.
If we reduce at a generic nonzero level we instead obtain the Calabi space
$T^* \C \PP^3$.

\medskip
In each case the hyperk\"ahler space  has an $\SU(4)$-action preserving the
hyperk\"ahler structure -- in the case of ${\mathcal U} (Gr_{2} \C^4)$ this is induced from the $\SU(4)$-action on the quaternionic K\"ahler Wolf space\linebreak
$Gr_{2} (\C^4) = \SU(4)/S(\Un(2) \times \Un(2))$.
This $\SU(4)$-action includes two commuting $\SU(2)$'s, as well as
a circle action commuting with each $\SU(2)$ factor, in accordance with
our interpretation of the space as a contraction for $\SU(2)$.

\medskip
In this case the Weyl group action on the complex symplectic contraction had already been observed by Swann, see Proposition 4.4.1 in \cite{Swann.dphil}, as the Galois action for the covering of a nilpotent orbit closure in $\sP(2,\C)$ by the highest root nilpotent orbit closure in $\sln(4,\C)$.  Indeed, if we identify $\C^4\otimes \C^4$ with $\gl(4)\cong \C^4\otimes (\C^4)^*$ by means of ${v\otimes w\mapsto v\otimes (\widetilde{J}w)^T}$ (where $\widetilde{J}=\left(\begin{array}{cc}J& 0 \\ 0 &  J\end{array}\right)$ and $J$ is as above), then $\sP(2)$ corresponds to $\operatorname{Sym}^2\C^4\subset \C^4\otimes \C^4$.  The action of $\gamma$ is then just the restriction of the transposition of the factors of $\C^4\otimes\C^4$, which on $\gl(4)$ translates to $A\mapsto \widetilde{J}A^T\widetilde{J}$.

\medskip
The locus $Q^R_{\circ}$ of \S \ref{complex} corresponds to the locus where the $T_{\C}=\C^*$-moment map $\beta \alpha = \alpha_1 \beta_1 + \alpha_2 \beta_2$ is nonzero. The associated locus in the contraction represented by points of $Q^R_{\circ} \times Q^L_{\circ}$ is therefore the 
set of points where the sum of the moment maps from the right and left implosions is zero, but the values of the individual maps are nonzero.

\begin{remark}
As described in Section 2, for $K=\SU(n)$ we have a quiver description of the implosion,
as the space of full flag quivers
\begin{equation*}
  0 \stackrel[\beta_0]{\alpha_0}{\rightleftarrows}
  \C \stackrel[\beta_1]{\alpha_1}{\rightleftarrows}
  \C^{2}\stackrel[\beta_2]{\alpha_2}{\rightleftarrows}\dots
  \stackrel[\beta_{n-2}]{\alpha_{n-2}}{\rightleftarrows} \C^{n-1}
  \stackrel[\beta_{n-1}]{\alpha_{n-1}}{\rightleftarrows} \C^{n},
\end{equation*}
with $\alpha_0=\beta_0=0$,
satisfying the equations
\begin{equation}  \label{eq:mmcomplex}
  \alpha_{i-1}\beta_{i-1} - \beta_i \alpha_i = \lambda^\C_i I \qquad
  (i=1,\dots,n-1),
\end{equation}
for free complex scalars \( \lambda^\C_1,\dots,\lambda^\C_{n-1} \),
modulo the natural action of 
$\prod_{i=1}^{n-1} \SL(i,\C)$
\begin{equation} \label{quiveract}
\alpha_i \mapsto g_{i+1} \alpha_i g_{i}^{-1}, \;\;\; 
\beta_i \mapsto g_i \beta_i g_{i+1}^{-1} \;\;\;(i=1, \ldots, n-2)
\end{equation}
\[
\alpha_{n-1} \mapsto \alpha_{n-1}g_{n-1}^{-1}, \;\;\;
\beta_{n-1} \mapsto g_{n-1} \beta_{n-1}.
\]
This can be identified with the hyperk\"ahler quotient of the space
of full flag quivers by the action of $\prod_{i=1}^{n-1} \SU(i)$.
Note that if $n=2$ this just reduces to $\Hom(\C,\C^2) \oplus \Hom(\C^2, \C)$ as in the example above since the
remaining equation \ref{eq:mmcomplex}) is automatic. For higher $n$ we do not in general have a simple description
of the space, although for $n=3$ it is the Swann bundle of the Grassmannian $SO(8)/(SO(4) \times SO(4))$, that is the closure of the 
minimal nilpotent orbit in $\so(8, \C)$.

We have a residual action of $\SL(n)$ by extending (\ref{quiveract}) to $i=n$, as well as a complex torus action
complexifying the hyperk\"ahler action of
of $T= \prod_{i=1}^{n-1} \Un(i)/\SU(i)$, whose moment map gives the $\lambda_i$.
In general, elements of the contraction may therefore be represented by $T$-equivalence classes of pairs of
quivers satisfying the equations, but with opposite $\lambda_i$.
\end{remark}

 \begin{remark}
In \cite{DKR} an alternative approach to implosion was introduced using
Nahm's equations
\[
\frac{dT_i}{dt} + [T_0,T_i]=[T_j,T_k] \;\;\; : \;\;\; (ijk) \;{\rm cyclic \;permutation \;of} \;(123).
\]
The idea here is to consider Nahm data on the half line
$[0,\infty)$ asymptotic to a commuting triple $(\tau_1,\tau_2,\tau_3)$
of elements of a fixed Cartan algebra. We further collapse by gauge transformations asymptotic at infinity to the commutator of the 
common centraliser $C$ of the triple (so no collapsing occurs on the
open dense set where $C$ is the maximal torus). We obtain a stratified pseudo-hyperk\"ahler space with an action of $T$, represented by gauge transformations asymptotic to values in $T$ at infinity. Moreover, the moment map for the $T$-action is just evaluation of the Nahm data at infinity, i.e. the triple $(\tau_1, \tau_2, \tau_3)$. The metric
becomes a genuine positive definite hyperk\"ahler metric on the level sets of the moment map, and the quotient gives the Kostant variety as required.

This construction will in general give a rather different space from the
implosion that we have discussed -- in particular the structure as an algebraic variety is still unclear. However, we may mimic the contraction construction and take the hyperk\"ahler quotient by $T$ of two copies of this space, and obtain a space with the correct dimension and action.
Being in the zero-level set of the $T$-action means the two sets of Nahm data are equal and opposite at infinity. 

Using the symmetry
$T_i(t) \mapsto-T_i(-t)$ of the Nahm equations, we may regard such configurations as giving Nahm data on two half lines that match at the respective points at infinity. Note that in general, we have a scaling symmetry $T_i(t) \mapsto cT_{i}(ct)$ of the Nahm equations that expands the
interval of definition by a factor of $c^{-1}$ if $0 < c < 1$, and under this symmetry the Nahm matrices scale by $c$ but their derivatives scale by $c^2$.
Recall that $T^*K_{\C}$ may be identified with the space of $\kf$-valued solutions to Nahm's equations smooth on a finite interval. 
Intuitively, we may view the above construction
as a limiting case of stretching out the interval so that the Nahm 
matrices are almost commuting in the interior.

 \end{remark}

\section*{Acknowledgements} We thank the organisers of the conference
"Moduli spaces and geometric structures" held in ICMAT Madrid in September 2022
for providing such a stimulating atmosphere.  We thank Victor Ginzburg and David Kazhdan for sending us a copy of \cite{ginzburg-kazhdan.unpubl}. We also thank the referee for several valuable suggestions.

\end{document}